      \theoremstyle{plain}
      \newtheorem{theorem}{Theorem}[section]
      \newtheorem{lemma}[theorem]{Lemma}
      \newtheorem{corollary}[theorem]{Corollary}
      \newtheorem{proposition}[theorem]{Proposition}
      \newtheorem{definition}[theorem]{Definition}
\numberwithin{equation}{section}
      \def\@setcopyright{}
      \def\serieslogo@{}
\def\A{\EuScript{A}} 
\def\B{\EuScript{B}} 
\def\F{\EuScript{F}} 
\def\E{\mathcal{E}}
\def\M{\mathcal{M}}
\def\R{\mathbb R}
\def\C{\mathbb C}
\def\Z{\mathbb Z}
\def\N{\mathbb N}
\def\Q{\mathbb Q}
\def\T{\mathbb T}
\def\bv{\mathbf v}
\def\tC{\tilde C}
\def\tB{\tilde \B}
\def\dist{\text{dist}}
\def\Id{\text{Id}}
\def\e{\epsilon}
\def\a{\alpha}
\def\QED{\hfill\hfill{\square}}
\begin{document}

\author{Victoria Sadovskaya$^{*}$}

\address{Department of Mathematics, The Pennsylvania State Ubiversity, University Park, PA 16802, USA.}
\email{sadovskaya@psu.edu}

\title [Cohomology  of fiber bunched cocycles over hyperbolic systems]
{Cohomology  of fiber bunched cocycles \\ over hyperbolic systems} 

\thanks{$^{*}$ Supported in part by NSF grant DMS-1301693}

\begin{abstract} 
We consider  H\"older continuous fiber bunched $GL(d,\R)$-valued cocycles 
over an Anosov diffeomorphism.  We show that two such cocycles are 
H\"older continuously cohomologous if they have equal periodic data, 
and prove a result for cocycles with conjugate periodic data. 
We obtain a corollary for cohomology between {\em any} constant cocycle and its small perturbation.
The fiber bunching condition  means that non-conformality of the cocycle is dominated by the expansion and contraction in the base. We show that this condition can be established based on the periodic data. Some important examples of cocycles
come from the differential of the diffeomorphism and its  restrictions 
to invariant sub-bundles. We discuss an application of our results to the question when an Anosov diffeomorphism is smoothly
 conjugate to a $C^1$-small perturbation.  We also establish H\"older continuity of a measurable conjugacy between a fiber bunched cocycle and a 
 uniformly quasiconformal one. Our main results also hold for cocycles 
with values in a closed subgroup of $GL(d,\R)$,  for cocycles 
over hyperbolic sets and shifts of finite type, and for linear cocycles 
on a non-trivial vector bundle.
  
 \end{abstract}

\maketitle


\section{Inroduction}
Cocycles and their cohomology arise naturally in the theory of group 
actions and play an important role in dynamics. In this paper we study 
cohomology of H\"older continuous group-valued cocycles over hyperbolic 
dynamical systems.
Our motivation comes in part from questions in local and global rigidity 
for hyperbolic systems and actions, where the derivative and  the Jacobian provide important examples of cocycles.
We state our results for the case of an Anosov diffeomorphism, but
they also hold for cocycles over hyperbolic sets and symbolic dynamical 
systems.

\begin{definition} Let $f$ be a diffeomorphism of a compact 
manifold $\M$ and let $A$  be a H\"older continuous function 
from $\M$ to a  metric group  $G$. The $G$-valued cocycle 
over $f$ generated by $A$ is the map 
 $\A:\M \times \Z \to G$ defined  by 
 $$
\begin{aligned}
&\A(x,0)=\A_x^0=e_G,  \;\;\;\; \A(x,n)=\A_x^n = 
A(f^{n-1} x)\circ \cdots \circ A(x) \quad \text{ and }\;\; \\
&\A(x,-n)=\A_x^{-n}= (\A_{f^{-n} x}^n)^{-1} = 
 (A(f^{-n} x))^{-1}\circ \cdots \circ (A(f^{-1}x))^{-1}, \quad n\in \N.
\end{aligned}
$$
\end{definition}


\noindent If the tangent bundle of $\M$ is trivial, i.e. $T\M= \M\times \R^d$, 
then the differential $Df$ can be viewed as a $GL(d,\R)$-valued cocycle:
$  A_x=Df_x$ and  $\A_x^n=Df^n_x.$
 More generally, one can consider restrictions of $Df$
to invariant sub-bundles of $T\M$, for example stable and unstable. Typically, these sub-bundles are only H\"older continuous, and hence 
so are the corresponding cocycles. On the other hand, H\"older regularity 
is necessary to develop a meaningful theory for cocycles over 
hyperbolic systems, 
even in the simplest case of $G=\R$.

\begin{definition} \label{cohdef}
Cocycles $\A$ and $\B$ are 
(measurably, continuously)  {\em cohomologous} 
if there exists a (measurable, continuous) 
function  $C:\M\to G$ such that
\begin{equation}\label{conj eq}
  \A_x^n=C(f^n x) \circ \B_x^n \circ  C(x)^{-1} 
  \quad\text{ for all }n\in \Z \text{ and }x\in \M,
\end{equation}
equivalently, $\A_x=C(fx) \circ \B_x \circ C(x)^{-1}\,\text{ for all }x\in \M.$ 

We refer to $C$ as a {\em conjugacy} between $\A$  and $\B$.
It is also called a transfer map.
\end{definition}

H\"older continuous cocycles over hyperbolic systems have been extensively  studied starting with the seminal work of A. Liv\v{s}ic \cite{Liv1,Liv2}. 
The research has been focused on obtaining sufficient conditions
for cohomology in terms of the periodic data and on studying the regularity
of the conjugacy $C$, see \cite{KtN} for an overview.


\begin{definition}  \label{def cong}  
Cocycles $\A$ and $\B$ have\, {\em conjugate periodic data} if
for every periodic point $p=f^n(p)$ in $\M$
 there exists  $C(p)\in G$ such that 
\begin{equation}\label{conj periodic}
\A_p^n=C(p) \circ \B_p^n \circ C(p)^{-1}.
\end{equation}
\end{definition}

Clearly, having conjugate periodic data is a necessary condition 
for continuous cohomology of two cocycles, and it is natural 
to ask whether it is also sufficient.
If $G$ is an abelian group, the problem reduces to the case 
when $\B$ is the identity cocycle,  i.e. $\B_x=e_G$, and the periodic assumption  is simply $ \A_p^n=e_G$. The positive answer 
for this case was given by A. Liv\v{s}ic \cite{Liv1}.
Even for non-abelian $G$, the case of $\B=e_G$ has been 
studied most and by now is relatively well understood, see for example 
\cite{Liv2,NT95,PW,LW,K11}.

For non-abelian $G$, however, the general problem does not reduce to 
the special case $\B=e_G$ and is much more difficult. 
There are very few results for non-abelian groups, 
and almost none beyond the essentially compact case. 
Even when $C(p)$ is bounded the answer 
is negative in general \cite{S12}. If $C(p)$ is  H\"older, conjugating 
$\B$ by the extension of $C$ reduces the problem to the case
of equal periodic data, i.e. $\A_p^n= \B_p^n$. Positive results for  
equal periodic data, as well as some results for conjugate data, 
were established by W.~Parry \cite{Pa} for compact $G$ and, 
somewhat more generally, by K. Schmidt \cite{Sch} for cocycles 
with ``bounded distortion". First results outside this setting 
were obtained in \cite{S12} for certain types of $GL(2,\R)$-valued cocycles.

\vskip.1cm

In this paper we consider fiber bunched  cocycles with values in $GL(d,\R)$
or its closed subgroup.
We establish H\"older cohomology for cocycles with equal periodic data
and prove a result for cocycles with conjugate periodic data 
under a mild regularity assumption on $C(p)$.
The fiber bunching condition  \eqref{fiber bunched} means that non-conformality of the cocycle is, in a sense, dominated by expansion and contraction in the base.  In particular, conformal and uniformly 
quasiconformal cocycles satisfy this condition.
Fiber bunching and similar assumptions ensure convergence of certain 
iterates of the cocycle and play a crucial role in the non-commutative case.  We show that fiber bunching can be obtained from the 
periodic data, and hence we assume it for only one of the cocycles.
We  obtain a corollary for perturbations of {\em any} constant 
cocycle, not necessarily fiber bunched.

We also consider a related question whether a measurable solution $C$
of \eqref{conj eq} is necessarily continuous. 
Even the case of $\B=e_G$ remains open in full generality, but  positive answers were obtained under additional assumptions
\cite{Liv2, GS, NP, PW}. 
The case of two arbitrary cocycles with values in a compact 
group was resolved affirmatively by  W.~Parry and M. Pollicott  \cite{PP}, 
and by K. Schmidt \cite{Sch} for cocycles 
with ``bounded distortion".
Positive results for certain types of $GL(2,\R)$-valued cocycles 
 were obtained in \cite{S12}. On the other hand,
examples of $GL(2,\R)$-valued cocycles which are measurably but not continuously cohomologous were constructed in \cite{PW}, moreover both cocycles can be made arbitrarily close to the identity.
This shows that fiber bunching of the cocycles 
does not ensure continuity of $C$. In this paper we establish H\"older
 continuity of a measurable conjugacy under a stronger assumption 
 that one cocycle is fiber bunched and the other one is uniformly
quasiconformal. For smooth cocycles, higher regularity of the conjugacy
then follows from \cite{NT98}.


We state the results on cohomology of cocycles in Section 2
and and give the proofs in Section~\ref{proofs}.
We describe other settings for our results in Section \ref{settings}.
In Section \ref{localrig} we discuss an application to 
the question when an Anosov diffeomorphism 
is smoothly conjugate to a $C^1$-small perturbation.

We would like to thank Boris Kalinin for helpful discussions.


\section{Statement of results on cohomology of cocycles} 

\noindent{\bf Anosov diffeomorphisms.} Let $\M$ be a compact connected Riemannian manifold.
We recall that a diffeomorphism  $f$ of  $\M$
 is called {\it Anosov}\, if there exist a splitting 
of the tangent bundle $T\M$ into a direct sum of two $Df$-invariant 
continuous subbundles $E^s$ and $E^u$,  a Riemannian 
metric on $\M$, and  continuous  
functions $\nu$ and $\hat\nu$  such that 
\begin{equation}\label{Anosov def}
\|Df(\bv^s)\| < \nu(x) < 1 < \hat\nu(x) <\|Df(\bv^u)\|
\end{equation}
for any $x \in \M$ and unit vectors  
$\,\bv^s\in E^s(x)$ and $\,\bv^u\in E^u(x)$.
The distributions $E^s$ and $E^u$ are called stable and unstable. 
They are tangent to the stable and unstable foliations 
$W^s$ and $W^u$ respectively (see, for example \cite{KH}).
A diffeomorphism is said to be {\it transitive} if there is a point $x$ in $\M$
with dense orbit. All known examples of Anosov diffeomorphisms have this property.
\vskip.3cm

\noindent {\bf Standing assumptions.} In this paper,

{\it \noindent $f$ is a $C^2$ transitive Anosov diffeomorphism 
of a compact connected manifold $\M$,

\noindent  $\A$ and $\B$ are $\beta$-H\"older continuous 
$GL(d,\R)$-valued cocycles over  $f$.}
\vskip.2cm

 We denote by  $\| A\|$  the operator norm of the matrix 
$A$ and 
we use the following distance on  $GL(d,\R)$:
$
\; d(A, B) = \|A-B\| + \|A^{-1}-B^{-1}\|.
$
\vskip.1cm
A $GL(d,\R)$-valued cocycle $\A$ is $\beta$-H\"older continuous
if there exist constant $c$ such that 
$\,d(\A_x, \A_y) \le c\, \dist(x,y)^\beta \,$ for all $x,y\in \M$.

\begin{definition} A $\beta$-H\"older continuous cocycle $\A$ 
over an Anosov diffeomorphism $f$ is\,
 {\em fiber bunched} if 
there exist numbers $\theta<1$ and $L$  such that for all $x\in\M$ and $n\in \N$,
\begin{equation}\label{fiber bunched}
\| \A_x^n\|\cdot \|(\A_x^n)^{-1}\| \cdot  (\nu^n_x)^\beta < L\, \theta^n \;\text{ and}\quad
\| \A_x^{-n}\|\cdot \|(\A_x^{-n})^{-1}\| \cdot  (\hat \nu^{-n}_x)^\beta < L\, \theta^n,
\end{equation}
where 
$ \;\nu^n_x=\nu(f^{n-1}x)\cdots\nu(x) \,\text{ and }\;
   \hat\nu^{-n}_x=(\hat \nu(f^{-n}x))^{-1}\cdots  (\hat\nu(f^{-1}x))^{-1}.$
\end{definition}

First we establish H\"older cohomology for cocycles with equal periodic data.

\begin{theorem} \label{equal data}
Suppose that  a cocycle  $\A$ is fiber bunched 
and  a cocycle $\B$ has the same periodic data, 
i.e. $\B_p^n=\A_p^n$ whenever $f^n(p)=p$. 
Then $\A$ and $\B$ are
$\beta$-H\"older continuously cohomologous.
Moreover, if $\A$ and $\B$ take values in a closed subgroup 
of $GL(d,\R)$, then a  $\beta$-H\"older continuous
conjugacy between them can be chosen in the same subgroup.
\end{theorem}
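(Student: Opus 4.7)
The strategy is a Liv\v{s}ic-type construction adapted to fiber bunched cocycles. Fix a point $x_0\in\M$ with dense $f$-orbit and declare $C(x_0)=\Id$. The conjugacy relation \eqref{conj eq} then forces
$$
C(f^n x_0)\;=\;\A_{x_0}^n\circ \B_{x_0}^{-n},\qquad n\in\Z,
$$
so the whole theorem reduces to showing that this $C$ is uniformly $\beta$-H\"older on the orbit $\{f^n x_0:n\in\Z\}$; uniform continuity then yields a unique continuous extension to $\M$, and the conjugacy identity propagates by density.

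The first ingredient I would assemble is the stable and unstable holonomies of $\A$. For $y\in W^s_{\text{loc}}(x)$, the fiber bunching condition \eqref{fiber bunched} makes $(\A_y^n)^{-1}\circ \A_x^n$ a geometric Cauchy sequence; the limit $H^{s,\A}_{x,y}$ is a $\beta$-H\"older family of linear isomorphisms satisfying $\A_y\circ H^{s,\A}_{x,y}=H^{s,\A}_{fx,fy}\circ \A_x$, and analogously on unstable leaves. The second ingredient, flagged in the abstract, is the propagation of fiber bunching from $\A$ to $\B$ via the periodic data: since $\B_p^n=\A_p^n$ on periodic orbits and periodic orbits are dense, the inequalities \eqref{fiber bunched} for $\A$ should pass to $\B$ (possibly with slightly worse constants), which in turn produces analogous holonomies $H^{s,\B}$ and $H^{u,\B}$ for $\B$.

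The heart of the argument is the H\"older estimate on the orbit. Given $y=f^k x_0$ and $z=f^\ell x_0$ with $\dist(y,z)$ small, I would apply the Anosov closing lemma to the segment $y,fy,\dots,f^{\ell-k}y=z$ to produce a periodic point $p$ of period $n=\ell-k$ whose orbit shadows $y,fy,\dots,f^{n}y$ with errors decaying geometrically from both endpoints. Equal periodic data gives $\A_p^{n}\circ \B_p^{-n}=\Id$. Using the $\A$- and $\B$-holonomies together with local product structure, one compares $\A_y^{n}$ with $\A_p^{n}$ and $\B_y^{n}$ with $\B_p^{n}$ up to a $\beta$-H\"older error in $\dist(y,z)$, and assembles these comparisons into a bound $d\bigl(\A_y^{n}\circ \B_y^{-n},\Id\bigr)\le K\,\dist(y,z)^\beta$. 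Via the defining formula for $C$, this is precisely the required $d(C(y),C(z))\le K\,\dist(y,z)^\beta$.

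The most delicate step is the propagation of fiber bunching from $\A$ to $\B$: without it one cannot construct $\B$-holonomies and the shadowing comparison on the $\B$ side collapses. A secondary subtlety is uniformity: the closing-lemma and holonomy errors must be tracked so that the final bound is proportional to $\dist(y,z)^\beta$ \emph{independently} of the shadowing length $n$. For the closed subgroup statement, each $C(f^n x_0)=\A_{x_0}^n\circ \B_{x_0}^{-n}$ lies in the subgroup by hypothesis, and since the subgroup is closed, its H\"older extension to $\M$ continues to take values there.
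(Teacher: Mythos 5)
Your plan is the classical Liv\v{s}ic scheme: fix a transitive point $x_0$, set $C(f^nx_0)=\A_{x_0}^n(\B_{x_0}^n)^{-1}$, prove a uniform H\"older estimate on the orbit via the Anosov closing lemma, then extend by density. The paper does something structurally different: it first passes to an iterate $f^N$ with a genuine fixed point $p$, constructs $C$ on the homoclinic class $W^s(p)\cap W^u(p)$ as $H^{\A,s}_{p,x}\circ C_p\circ H^{\B,s}_{x,p}$ (with $C_p=\Id$), proves consistency of the stable/unstable definitions via periodic cycle functionals, extends by density of homoclinic points, and then descends from an $f^N$-conjugacy to an $f$-conjugacy via a centralizer argument with two coprime periods. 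Your approach, were it to work, would be genuinely cleaner (no passage to iterates, no centralizer argument). But it contains a gap.

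The gap is in the H\"older estimate. With $y=f^kx_0$, $z=f^\ell x_0$, $n=\ell-k$, the quantity you actually need to bound is not $\A_y^n\circ(\B_y^n)^{-1}$ but
$$
C(z)\circ C(y)^{-1}\;=\;\A_y^n\circ C(y)\circ(\B_y^n)^{-1}\circ C(y)^{-1},
$$
with a conjugation by $C(y)$ sitting in the middle that does not cancel. Replacing $\A_y^n$ by $(\Id+S_1)\A_p^n(\Id+S_2)$ and $\B_y^n$ by $(\Id+S_3)\A_p^n(\Id+S_4)$ via holonomies (valid, with $\|S_i\|=O(\dist(y,z)^\beta)$), one is left with a leading term $\A_p^n\,C(y)\,(\A_p^n)^{-1}C(y)^{-1}$, which is not $\Id$, and error terms of the form $\A_p^n\,S_i\,(\A_p^n)^{-1}$ whose norm is governed by $\|\A_p^n\|\cdot\|(\A_p^n)^{-1}\|\cdot\dist(y,z)^\beta$. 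Crucially, here the holonomy errors $\|S_i\|$ at the two endpoints of the shadowing segment are of fixed size $O(\dist(y,z)^\beta)$, \emph{not} decaying in $n$, so fiber bunching gives no cancellation; $\|\A_p^n\|\cdot\|(\A_p^n)^{-1}\|$ can grow unboundedly in $n$ for a fiber bunched (but not quasiconformal) cocycle. Contrast this with the paper's Proposition~\ref{equal functionals}, where the shadowing point $q$ is taken for the segment $\{f^ix:\,i=-n,\dots,n\}$ around a \emph{fixed} homoclinic point, so the endpoint errors $R_i^n$ are of size $(\max\{\nu_x^n,\hat\nu_x^{-n}\})^\beta$ and fiber bunching yields $\|\B_q^n\|\,\|(\B_q^n)^{-1}\|\,\|R_i^n\|\le c\,\theta^n\to0$. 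That decay is exactly what is missing in your set-up. The intermediate conjugation by $C(y)$ is also unremovable: $C$ is not yet known to be bounded, let alone H\"older, so the argument is circular. This is the obstacle — well understood since Parry's work in the compact case — that forces the paper to build $C$ out of holonomies directly rather than out of orbit data.

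Your remaining ingredients (propagation of fiber bunching from $\A$ to $\B$ via periodic data, existence of holonomies, closedness of the subgroup) are correct and match the paper's Proposition~\ref{fiber bunching from periodic} and Proposition~\ref{existence of holonomies}, but they do not rescue the central estimate.
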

    
In this theorem we assume fiber bunching only for $\A$, as for $\B$ 
it follows from the proposition below. We give a necessary 
and sufficient condition for a cocycle
to be  fiber bunched in terms of its periodic data in 
Corollary  \ref{periodic bunching}.

\begin{proposition} \label{fiber bunching from periodic} 
Suppose that a cocycle $\A$ is  fiber bunched and $\B$ has
conjugate periodic data.
Then $\B$ is also fiber bunched.
\end{proposition}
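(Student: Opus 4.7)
The plan is to reduce fiber bunching to a spectral radius inequality at periodic orbits, transfer that inequality to $\B$ using conjugate periodic data, and then promote it back to uniform fiber bunching of $\B$ by combining Kalinin's periodic approximation of Lyapunov exponents \cite{K11} with the subadditive variational principle.

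For the first step, fix a periodic point $p$ with $f^m p = p$. Applying \eqref{fiber bunched} to the iterates $n=km$ and using $\A_p^{km}=(\A_p^m)^k$ and $\nu_p^{km}=(\nu_p^m)^k$, then taking $k$-th roots and letting $k\to\infty$, Gelfand's formula yields
\begin{equation*}
\rho(\A_p^m)\cdot \rho((\A_p^m)^{-1})\cdot (\nu_p^m)^\beta \;\le\; \theta^m,
\end{equation*}
where $\rho$ denotes spectral radius. Since $\A_p^m$ and $\B_p^m$ are similar matrices by Definition \ref{def cong}, they have identical spectra, and the same inequality holds with $\A$ replaced by $\B$; the analogous bound involving $\hat\nu$ is obtained by applying the same reasoning to the second half of \eqref{fiber bunched}.

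To upgrade this periodic-orbit bound to a uniform estimate, set $\Phi_n(x):=\log(\|\B_x^n\|\cdot\|(\B_x^n)^{-1}\|)+\beta\log\nu_x^n$, a continuous subadditive cocycle over $f$. By Kalinin's theorem \cite{K11}, for each ergodic $f$-invariant measure $\mu$ there is a sequence of periodic points $p_k$ of periods $m_k$ with $\mu_{p_k}\to\mu$ weakly and with $m_k^{-1}\log\rho(\B_{p_k}^{m_k})\to\chi_{\max}(\B,\mu)$ and $m_k^{-1}\log\rho((\B_{p_k}^{m_k})^{-1})\to-\chi_{\min}(\B,\mu)$. Together with the Birkhoff averages of the continuous function $\log\nu$, the periodic bound gives
\begin{equation*}
\lim_n \frac{1}{n}\int \Phi_n\, d\mu \;=\; \chi_{\max}(\B,\mu)-\chi_{\min}(\B,\mu) + \beta\int \log\nu\, d\mu \;\le\; \log\theta \;<\; 0
\end{equation*}
for every ergodic $\mu$. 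The subadditive variational principle for continuous subadditive potentials over compact dynamical systems then produces $\limsup_n n^{-1}\max_x \Phi_n(x) \le \log\theta$, from which constants $L'$ and $\theta'\in(\theta,1)$ realizing the first half of \eqref{fiber bunched} for $\B$ follow at once; the second half is handled by applying the same argument to $f^{-1}$.

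The main obstacle is this last step: extracting a uniform exponential upper bound on $\Phi_n$ from information known only at periodic orbits. Kalinin's approximation theorem transfers the periodic inequality to all ergodic measures, and the subadditive variational principle converts that measure-theoretic bound into the required pointwise uniform control. Both ingredients are by now standard for H\"older matrix cocycles over hyperbolic systems, and together they make the fiber bunching condition genuinely detectable from periodic data.
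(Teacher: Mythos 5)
Your proof is correct and follows essentially the same route as the paper: derive a spectral-radius inequality at periodic points from fiber bunching of $\A$, transfer it to $\B$ via conjugate periodic data, use Kalinin's periodic approximation of Lyapunov exponents \cite{K11} to deduce negativity of the top exponent of the subadditive cocycle $\log(\|\B_x^n\|\,\|(\B_x^n)^{-1}\|\,(\nu_x^n)^\beta)$ for all ergodic measures, and convert this to a uniform exponential bound. The only cosmetic difference is that you invoke the generic subadditive variational principle at the final step, whereas the paper cites the equivalent Lemma~\ref{subadditive} (from~\cite{KS12}), which directly produces an $N$ with $a_N(x)<0$ for all $x$; the paper also packages $\B$ and $\nu$ into a single cocycle $\F=\B\oplus\nu$ to get simultaneous periodic approximation, which you handle instead by noting the Birkhoff average of the continuous function $\log\nu$ converges under weak-$*$ convergence of the periodic orbit measures.
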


Now we consider the question whether conjugacy 
of the periodic data for two cocycles implies cohomology.
The case of H\"older congugacy of the periodic data 
easily reduces to the case of equality. Indeed, 
one can extend the H\"older continuous function $C(p)$ 
to $\M$ and consider the cocycle 
$\tilde \B_x= C(fx)\circ \B_x \circ C(x)$
so that $\A$ and $\tilde \B$ have equal periodic data.
By  Theorem \ref{equal data}
the cocycles $\A$ and $\tilde \B$ are H\"older cohomologous,
and hence so are $\A$ and $ \B$.

On the other hand, Example 2.7 in \cite{S12}  shows that  
boundedness assumption  for the conjugacy is too weak:
arbitrarily close to the identity, there exist smooth
$GL(2, \R)$-valued cocycles 
 that have conjugate periodic data
 with $C(p)$ uniformly bounded, 
but are not even measurably cohomologous. 

In the next theorem we assume that the diffeomorphism $f$
has a fixed point. It is an open question whether every Anosov diffeomorphism satisfies this assumption.
We obtain H\"older cohomology of the cocycles 
if $C(p)$ is H\"older continuous at a fixed point. 
If we assume that $C(p)$ is H\"older continuous
at a periodic point $p=f^Np$, then the theorem yields 
H\"older cohomology of the iterates $\A^N$ and $\B^N$ over $f^N.$

\begin{theorem} \label{conjugate  data}
Suppose that $\A$ is fiber bunched and $\B$ has conjugate periodic data.
In addition, suppose that $f$ has a fixed point $p_0$  and the conjugacy $C(p)$ 
is $\beta$-H\"older continuous at  $p_0$,
i.e.  $d(C(p), C(p_0)) 
 \le c\, \dist (p,p_0)^\beta$ for every periodic point $p$.

Then $C(p_0)$ extends to a unique $\beta$-H\"older continuous conjugacy 
$C$ between $\A$ and $\B$.
Moreover, if $\A$, $\B$, and $C(p_0)$ take values in a closed subgroup  $G_0$ of $GL(d,\R)$, then $C(x)\in G_0$ for all $x$. 
\end{theorem}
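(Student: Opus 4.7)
The plan is to reduce to Theorem~\ref{equal data} by extending the function $C$, currently defined only on periodic points, to a globally $\beta$-H\"older continuous map $\tilde C : \M \to GL(d,\R)$ that agrees with $C$ on periodic points. Since periodic points are dense in $\M$ and the identity $\A_x = \tilde C(fx)\circ \B_x \circ \tilde C(x)^{-1}$ holds on every periodic orbit by the conjugate periodic data hypothesis, continuity will then force it on all of $\M$. Replacing $\B$ by $C(p_0)^{-1}\circ \B \circ C(p_0)$, a constant conjugation that preserves fiber bunching and all H\"older norms, we may assume $C(p_0)=\Id$. If $C(p_0)\in G_0$, the whole argument stays in $G_0$ since $G_0$ is closed.

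The main and hardest step is to promote the assumed local estimate $d(C(p),\Id)\le c\,\dist(p,p_0)^\beta$ to a global H\"older bound $d(C(p),C(q))\le c\,\dist(p,q)^\beta$ valid for arbitrary periodic $p,q$. By Proposition~\ref{fiber bunching from periodic}, $\B$ is also $\beta$-H\"older and fiber bunched, so both $\A$ and $\B$ admit $\beta$-H\"older stable and unstable holonomies. The plan is to combine three ingredients: the Anosov closing lemma, which shadows any closed $\varepsilon$-pseudo-orbit by a genuine periodic orbit with exponential precision; transitivity of $f$, which connects a neighborhood of $p_0$ to a neighborhood of any prescribed periodic orbit; and the $\beta$-H\"older regularity of the holonomies of $\A$ and $\B$. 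For nearby periodic points $p$ and $q$, one builds a closed pseudo-orbit visiting the orbits of $p$ and $q$ while passing through a point near $p_0$; the closing lemma then produces a genuine periodic orbit shadowing this pseudo-orbit and, in particular, passing very close to $p_0$ where $C$ is controlled. Expressing the conjugating matrices along this shadowing orbit as products of $\A$- and $\B$-holonomies and comparing with analogous expressions for $C(p)$ and $C(q)$ should yield the required H\"older comparison. This bootstrapping of the local H\"older estimate at $p_0$ to a global one on the dense set of periodic points is the central technical obstacle.

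Once the global H\"older regularity on periodic points is secured, $C$ extends by uniform continuity to a $\beta$-H\"older map $\tilde C : \M \to GL(d,\R)$, taking values in $G_0$ whenever $C$ does (since $G_0$ is closed). The cocycle equation $\A_x = \tilde C(fx)\circ \B_x \circ \tilde C(x)^{-1}$ holds on the dense union of periodic orbits by hypothesis, and hence on all of $\M$ by continuity. For uniqueness, suppose $C_1$ and $C_2$ are two $\beta$-H\"older conjugacies with $C_1(p_0)=C_2(p_0)$; then $E := C_2^{-1}\circ C_1$ is a $\beta$-H\"older map with $E(p_0)=\Id$ satisfying the equivariance $E(fx) = \B_x\circ E(x)\circ \B_x^{-1}$. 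A standard computation with the fiber-bunched stable holonomies of $\B$ gives $E(y) = H^{\B,s}_{p_0,y}\circ E(p_0)\circ (H^{\B,s}_{p_0,y})^{-1} = \Id$ for every $y$ in the stable manifold $W^s(p_0)$; by transitivity, $W^s(p_0)$ is dense in $\M$, so $E\equiv \Id$.
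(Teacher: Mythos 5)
There is a genuine gap, and it is a wrong turn rather than just a missing detail. Your plan is to promote the hypothesis $d(C(p),\Id)\le c\,\dist(p,p_0)^\beta$ to a global bound $d(C(p),C(q))\le c\,\dist(p,q)^\beta$ for \emph{all} periodic $p,q$, and then extend the function $p\mapsto C(p)$ by density. But the hypothesis gives you no control over the choice of $C(p)$ away from $p_0$: at each periodic point $p$ of period $n$ the matrix $C(p)$ is only determined up to the centralizer of $\B_p^n$, and nothing in the hypothesis asserts that the given family $\{C(p)\}$ is chosen coherently, or is even continuous, away from $p_0$. Worse, even for periodic points $p,q$ arbitrarily close to $p_0$ the triangle inequality from the hypothesis only gives $d(C(p),C(q))\lesssim \dist(p,p_0)^\beta+\dist(q,p_0)^\beta$, which is far weaker than $\dist(p,q)^\beta$ when $p$ and $q$ are much closer to each other than to $p_0$. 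So the statement you are trying to bootstrap is, in general, simply false for the given data. The theorem only claims that $C(p_0)$ extends, not that the whole given family $\{C(p)\}$ does; the extension need not agree with the given $C(p)$ at any periodic point other than $p_0$.

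The argument the paper actually uses avoids this by extracting from the Hölder hypothesis a single scalar piece of information about $C(p_0)$ alone: via the closing lemma, $C(p_0)$ conjugates the periodic cycle functionals $H^{\A,s}_{x,p_0}\circ H^{\A,u}_{p_0,x}$ and $H^{\B,s}_{x,p_0}\circ H^{\B,u}_{p_0,x}$ for every homoclinic point $x$ of $p_0$ (Proposition~\ref{equal functionals}); the estimate at $p_0$ is used only to control the conjugating matrices at the approximating periodic points produced by closing, which are close to $p_0$. Then a \emph{fresh} conjugacy is built by transporting $C(p_0)$ with the stable and unstable holonomies: $C^s(x)=H^{\A,s}_{p_0,x}\circ C(p_0)\circ H^{\B,s}_{x,p_0}$ on $W^s(p_0)$ and the analogous $C^u$ on $W^u(p_0)$. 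The cycle-functional identity guarantees $C^s=C^u$ on the dense set of homoclinic points, and the Hölder estimate (H4) for the holonomies gives Hölder regularity of this function on that set, so it extends to $\M$ (Proposition~\ref{extends}). Your uniqueness argument at the end is essentially right and matches Proposition~\ref{prop intertwines}(c), and the reduction to $C(p_0)=\Id$ and the preservation of $G_0$ are fine. But the core existence step needs to be rerouted through the cycle functionals and holonomy-transport rather than through an unprovable global Hölder bound on the given $C(p)$.
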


The corollary below gives a similar result for a constant cocycle
and its perturbation without the fiber bunching assumption.
The proof is outlined in the end of Section \ref{localrig}.

\begin{corollary} \label{constant}
Suppose that $\A$ is a constant cocycle,  and  $\B$
is sufficiently close to $\A$ and has conjugate periodic data.
In addition, suppose that $f$ has a fixed point $p_0$  
and  $C(p)$ 
is H\"older continuous at  $p_0$.
Then $\A$ and $\B$ are 
H\"older continuously cohomologous.
\end{corollary}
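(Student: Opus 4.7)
The plan is to reduce Corollary \ref{constant} to Theorem \ref{conjugate data} by decomposing $\R^d$ into $\A$-invariant blocks on which $\A$ \emph{is} fiber bunched, even though $\A$ as a whole need not be. First I would conjugate $\B$ by the constant matrix $C(p_0) \in GL(d,\R)$, which preserves all hypotheses, so that without loss of generality $C(p_0) = \Id$ and hence $\B(p_0) = \A$ as matrices.

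Next, write $\R^d = V_1 \oplus \cdots \oplus V_k$ as the sum of generalized eigenspaces of $\A$ grouped by the distinct moduli $\rho_1 > \cdots > \rho_k$ of the eigenvalues of $\A$. On each $V_i$ every eigenvalue of $\A$ has modulus $\rho_i$, so $\|\A|_{V_i}^n\| \cdot \|\A|_{V_i}^{-n}\|$ grows at most polynomially in $|n|$, while $(\nu_x^n)^\beta$ and $(\hat\nu_x^{-n})^\beta$ decay exponentially. Hence each block cocycle $\A|_{V_i}$ is fiber bunched, even though the full cocycle $\A$ need not be.

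This modulus decomposition is a (very strong) dominated splitting for the constant cocycle $\A$, so by the standard persistence of dominated splittings for H\"older cocycles over hyperbolic systems, any sufficiently small H\"older perturbation $\B$ admits a unique $\B$-invariant $\beta$-H\"older splitting $\R^d = V_1^\B(x) \oplus \cdots \oplus V_k^\B(x)$ with $V_i^\B(x)$ uniformly close to $V_i$. Reading $\B|_{V_i^\B}$ in a H\"older frame produces a cocycle close to $\A|_{V_i}$, and since fiber bunching is an open condition, each $\B|_{V_i^\B}$ is also fiber bunched. At each periodic point $p$, the relation $\A_p^n = C(p)\, \B_p^n\, C(p)^{-1}$ forces $C(p)$ to intertwine the two dominated splittings by their uniqueness, so $C(p)$ restricts to block conjugacies $C_i(p): V_i^\B(p) \to V_i$. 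H\"older continuity of $C$ at $p_0$ together with H\"older continuity of the invariant subbundles yields H\"older continuity of each $C_i(p)$ at $p_0$, with $C_i(p_0) = \Id_{V_i}$.

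Now I would apply Theorem \ref{conjugate data}, in its vector-bundle form announced in the abstract, to each pair $(\A|_{V_i}, \B|_{V_i^\B})$ on the (H\"older) vector bundles $V_i$ and $V_i^\B$. This produces $\beta$-H\"older conjugacies $C_i(x): V_i^\B(x) \to V_i$ extending $C_i(p_0)$, and setting $C(x) := \bigoplus_i C_i(x)$ yields the desired $\beta$-H\"older conjugacy between $\A$ and $\B$ on all of $\M$. The most delicate step is the persistence result: one must verify that the H\"older modulus of the perturbed splitting and its closeness to the constant splitting can be controlled by the H\"older closeness of $\B$ to $\A$, so that the openness of fiber bunching can be invoked block by block, and so that the block-wise periodic conjugacies inherit H\"older regularity at $p_0$ from $C(p)$ and the subbundles.
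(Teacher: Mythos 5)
Your proposal is correct and follows essentially the same route as the paper: decompose $\R^d$ into generalized eigenspaces of $\A$ grouped by eigenvalue moduli (so that each block cocycle is fiber bunched for any $\beta$), invoke persistence of this dominated splitting for the nearby H\"older cocycle $\B$, observe that $C(p)$ respects the two splittings at periodic points, and apply Theorem~\ref{conjugate data} block by block before taking the direct sum. Your preliminary normalization $C(p_0)=\Id$ and your explicit remark that one needs the vector-bundle version of Theorem~\ref{conjugate data} for the nontrivial subbundles $V_i^\B$ are details the paper leaves implicit, but the argument is the same.
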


Next we consider the question whether a measurable conjugacy 
between two fiber bunched cocycles is continuous.
An example in \cite{PW} demonstrates that the answer is negative in general: arbitrarily close to the identity, there exist smooth 
$GL(d,\R)$-valued cocycles that are are measurably,
but not continuously cohomologous.
Thus we make a stronger assumption that one of 
the cocycles is uniformly quasiconformal.

\begin{definition} \label{quasiconformal}
A cocycle $\B$ is called {\em uniformly quasiconformal}
 if the quasiconformal distortion 
$\,K_\B(x,n)=\|\B^n_x\| \cdot \|(\B^n_x)^{-1}\|\,$
 is uniformly bounded for all $x\in \M$ and $n\in \Z$.
 If $\,K_\B(x,n)=1$ for all $x$ and $n$,
 the cocycle is said to be \em{conformal}.
 
\end{definition}

\begin{theorem} \label{measurable conjugacy}
Suppose that $\A$ is fiber bunched and 
$\B$ is uniformly quasiconformal. Let  $\mu$ be an ergodic
invariant  measure with full support and local product structure.

Then any $\mu$-measurable conjugacy between $\A$ and $\B$ is $\beta$-H\"older continuous,
i.e. it coincides with a $\beta$-H\"older continuous conjugacy on a set of full measure.

\end{theorem}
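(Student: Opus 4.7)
Since uniform quasiconformality of $\B$ implies fiber bunching, both $\A$ and $\B$ admit $\beta$-H\"older stable and unstable holonomies $H^{s,\A}_{x,y},H^{u,\A}_{x,y},H^{s,\B}_{x,y},H^{u,\B}_{x,y}$, defined when $y$ lies in the local stable or unstable leaf of $x$; these satisfy the usual invariance identities, e.g.\ $\A^n_y\circ H^{s,\A}_{x,y}=H^{s,\A}_{f^n x,f^n y}\circ \A^n_x$ and the analogue for $\B$. My plan is the standard three-step scheme: (i) show that $C$ intertwines these holonomies $\mu$-almost everywhere; (ii) use the local product structure of $\mu$ to assemble a continuous function $\bar C$ on a neighborhood of a density point that agrees with $C$ on a set of positive measure; (iii) propagate $\bar C$ to a global $\beta$-H\"older conjugacy by transitivity and the conjugacy equation.

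The heart of the matter is (i): for $\mu$-a.e.\ pair $(x,y)$ with $y\in W^s_{\mathrm{loc}}(x)$,
\begin{equation*}
C(y)=H^{s,\A}_{x,y}\circ C(x)\circ (H^{s,\B}_{x,y})^{-1},
\end{equation*}
and the analogue for $W^u$. I would apply Luzin's theorem to produce a compact set $\Lambda\subset \M$ with $\mu(\Lambda)>1-\e$ on which $C$ is uniformly continuous, and combine Birkhoff's theorem with local product structure to conclude that, for $\mu$-a.e.\ such $(x,y)$, there is a sequence $n_k\to\infty$ with $f^{n_k}x,f^{n_k}y\in\Lambda$ and $f^{n_k}x,f^{n_k}y\to x_\infty$. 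Substituting the holonomy identities into $C(f^n y)=\A^n_y C(y)(\B^n_y)^{-1}$ and using $\A^n_x=C(f^n x)\B^n_x C(x)^{-1}$ one obtains, with $E(x,y):=C(x)^{-1}(H^{s,\A}_{x,y})^{-1}C(y)H^{s,\B}_{x,y}$,
\begin{equation*}
C(f^{n_k}x)^{-1}\bigl(H^{s,\A}_{f^{n_k}x,f^{n_k}y}\bigr)^{-1}C(f^{n_k}y)\,H^{s,\B}_{f^{n_k}x,f^{n_k}y}=\B^{n_k}_x\,E(x,y)\,(\B^{n_k}_x)^{-1}.
\end{equation*}
As $k\to\infty$ the left side tends to $\Id$. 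Uniform quasiconformality bounds $\|\B^n_x\|\cdot\|(\B^n_x)^{-1}\|$ uniformly, so the normalised matrices $V_n:=\B^n_x/\|\B^n_x\|$ lie in a compact subset of $GL(d,\R)$; extracting a subsequence with $V_{n_k}\to V\in GL(d,\R)$ and noting that the scalar factor cancels, the right side converges to $V E(x,y) V^{-1}$, forcing $V E(x,y) V^{-1}=\Id$ and hence $E(x,y)=\Id$, which is the desired identity. The unstable analogue is identical under $n\mapsto -n$.

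Step (ii) is then routine: at a $\mu$-density point $x_0$, define $\bar C$ on a local product box by composing $C(x_0)$ first with a stable holonomy to the stable leaf through an intermediate point and then with an unstable holonomy; this $\bar C$ is $\beta$-H\"older in its argument because the holonomies are, and (i) together with local product structure forces $\bar C=C$ on a full-measure subset of the box. Step (iii) extends $\bar C$ to all of $\M$ via $\bar C(fx)=\A_x\bar C(x)\B_x^{-1}$: since $f$ is transitive and $\mu$ has full support, one covers $\M$ by forward iterates of the initial box, and H\"older control on the holonomies together with uniqueness along orbits yields a global $\beta$-H\"older conjugacy that agrees with $C$ on a set of full $\mu$-measure. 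The principal obstacle is Step (i), specifically the need to arrange that for $\mu$-a.e.\ pair $(x,y)$ on a common local stable leaf the orbits $\{f^n x\}$ and $\{f^n y\}$ return \emph{simultaneously} to the Luzin set $\Lambda$. This is exactly where the local product structure hypothesis on $\mu$ is essential, as it permits a Fubini-type argument along stable and unstable foliations to handle both coordinates at once.
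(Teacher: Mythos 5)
Your three-step scheme matches the paper's: show $C$ intertwines holonomies a.e., then upgrade to H\"older continuity on a full-measure set using local product structure, then extend. Two remarks, one substantive.

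\textbf{A genuine misstep in the simultaneous-return argument.} You claim that Birkhoff \emph{together with local product structure} yields, for a.e.\ pair $(x,y)$ with $y\in W^s_{\mathrm{loc}}(x)$, a sequence $n_k\to\infty$ with both $f^{n_k}x$ and $f^{n_k}y$ in the Luzin set $\Lambda$. Local product structure does nothing for you here: it controls the joint distribution of the stable and unstable coordinates, not simultaneous orbit returns of two points on the \emph{same} stable leaf. The correct device is to choose the Luzin set $S$ with $\mu(S)>1/2$ and take $Y$ to be the full-measure set of points whose Birkhoff frequency of visits to $S$ equals $\mu(S)$; then for any $x,y\in Y$ the set of $n$ with $f^nx,f^ny\in S$ has positive upper density, by inclusion--exclusion. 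This is exactly how the paper arranges it, and it requires nothing beyond ergodicity. Your parameterization $\mu(\Lambda)>1-\e$ works only if you also insist $\e<1/2$, and you do not say how the simultaneous returns are obtained; as written, this step would not go through, though the fix is a one-line change. Local product structure enters genuinely later, in assembling H\"older continuity on a product box from H\"older continuity along stable and unstable leaves, which you do invoke correctly in your Step (ii).

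\textbf{A different (and fine) way to close Step (i).} To conclude $E(x,y)=\Id$ from $\B^{n_k}_x E(x,y)(\B^{n_k}_x)^{-1}\to\Id$, you normalize $V_n=\B^n_x/\|\B^n_x\|$, observe that $\|V_n\|=1$ and $\|V_n^{-1}\|\le K$ by quasiconformality so $\{V_n\}$ lies in a compact subset of $GL(d,\R)$, pass to a convergent subsequence, and note the scalar cancels. The paper instead bounds the offending error term directly: it writes the difference of the two sides of the intertwining relation as $C(y)\circ(\B^n_y)^{-1}\circ r_n\circ\B^n_x\circ C(x)^{-1}$ with $\|r_n\|\to0$ along the return times, and controls $\|(\B^n_y)^{-1}\|\cdot\|\B^n_x\|$ using the relation $\B^n_x=H^{\B,s}_{f^nx,f^ny}\circ\B^n_y\circ H^{\B,s}_{y,x}$ (so that $\|\B^n_x\|\asymp\|\B^n_y\|$) together with quasiconformality. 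Both arguments use quasiconformality in the same essential way; your compactness formulation is perhaps cleaner to state, while the paper's direct estimate carries a quantitative $\theta^n$ rate that is not needed for this conclusion but mirrors the estimates used elsewhere in the paper.

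Modulo the $\mu(S)>1/2$ correction, your plan is sound and agrees with the paper's proof in structure.
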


A measure has local product structure if it is locally equivalent to the product of its conditional measures on the local stable and unstable manifolds.
Examples of ergodic measures with full support and local product structure include 
the measure of maximal entropy,  more generally Gibbs (equilibrium) measures of H\"older continuous potentials, and the invariant volume  if it exists \cite{PW}. 

\vskip.3cm


\section{Other settings} \label{settings}

\noindent{\bf Other systems in the base.} Our results hold and  the proofs apply
without significant modifications to $GL(d,\R)$ -valued cocycles over mixing locally 
maximal hyperbolic sets and over mixing shifts of finite type. 
Mixing holds automatically for transitive Anosov diffeomorphisms 
of connected manifolds. We briefly describe the other two settings. 
\vskip.2cm

\noindent{\bf 1. Cocycles over hyperbolic sets.} 
(See \cite{KH} for more details.)
Let $f$ be a diffeomorphism of a manifold $\M$.
A compact $f$-invariant  set $\Lambda \subset \M$ is
called {\em hyperbolic} if there  exist a continuous $Df$-invariant splitting 
$T_\Lambda \M = E^s\oplus E^u$, and a Riemannian metric and continuous functions $\nu$, $\hat \nu$ on an open set
$U \supset \Lambda$ such that 
\eqref{Anosov def} holds for all $x \in \Lambda$.
A $\beta$-H\"older  cocycle over the map $f|_{\Lambda}$
is fiber bunched if \eqref{fiber bunched} holds on $\Lambda$.

The set $\Lambda$ is called {\em locally maximal} if 
$\Lambda= \bigcap_{n\in \Z} f^{-n }(U)$ for some open set $U\supset \Lambda$. The map $f|_\Lambda$ is called {\em topologically mixing}\,
 if for any two open non-empty subsets $U,V$ of $\Lambda$
 there is $N\in \N$ such that $\, f^n(U)\cap V\ne \emptyset\,$ for all $n\ge N$.

\vskip.3cm
\noindent{\bf 2. Cocycles over shifts of finite type.}
Let $Q$ be $k \times k$ matrix with entries from $\{ 0,1 \} $ such that all 
entries of $Q^N$ are positive for some $N$. Let
$$
\Sigma = \{ \,x=(x_n) _{n\in \Z}\;\;  | \;\; 1\le x_n\le k \;\text{ and }\;
 Q_{x_n,x_{n+1}}=1 \,\text{ for every } n\in \Z \,\}.
$$
\noindent The shift map $\sigma:\Sigma \to \Sigma\,$ is defined by 
$(\sigma(x))_n=x_{n+1}$.
The system $(\Sigma,\sigma)$ is called a mixing {\em  shift of finite type}. 
$\Sigma$ has a natural family of 
metrics $d_\alpha$, $\alpha \in (0,1)$,
defined by 
$$
d_\alpha(x,y)=\alpha^{n(x,y)},
\;\text{ where }\;n(x,y)=\min\,\{ \,|i|\;\;| \;\, x_i \ne y_i  \}.
$$
The following sets play the role of the local stable and unstable 
manifolds of $x$:
$$
W^s_{loc}(x)=\{\,y\; | \;\, x_i=y_i, \;\;i\ge 0\,\}, \quad 
W^u_{loc}(x)=\{\,y\; | \;\, x_i=y_i, \;\;i\le 0\,\},
$$
indeed for $n\in \N$,
$$d_\alpha (\sigma^n(x), \sigma^n(y) )= \a^n \, d_\alpha (x,y) 
\quad\text{for }y\in W^{s}_{loc}(x),$$
$$d_\alpha (\sigma^{-n}(x), \sigma^{-n}(y) )= \a^n\, d_\alpha (x,y)
\quad\text{for }y\in W^{u}_{loc}(x).
$$
Hence the main distance estimate \eqref{dist}  in our proofs 
holds with $\nu=\a$ and $\hat \nu =1/\alpha$.
 A $\beta$-H\"older cocycle $\A$ over $(\Sigma,\sigma, d_\a)$ is  fiber bunched if there are $\theta<1$ and $L$ such that
$$
\| \A_x^n\|\cdot \|(\A_x^n)^{-1}\| \cdot  \alpha^{\beta |n|} < L\, \theta^{|n|}
\quad \text {for all }n\in \Z.
$$
\vskip.2cm

\noindent{\bf Linear cocycles over an Anosov diffeomorphism.} 
A $GL(d,\R)$-valued cocycle over $f$
can be viewed as an automorphism of the trivial vector bundle 
$\E =\M \times \R^d$. More generally, we can consider
{\em linear cocycles} over $f$, i.e. automorphisms of a $d$-dimensional vector bundle $\E$ over $\M$ covering $f$, see \cite{KS12} for details of this setting including H\"older regularity. 
The results (except for statements about subgroups) and the proofs extend directly to this context.


\section{proofs} \label{proofs}


\subsection{Fiber bunching and periodic data}  $\;$
In this section we prove Proposition \ref{fiber bunching from periodic}
and then we formulate the fiber bunching condition in terms of 
the periodic data.

\vskip.1cm

\noindent{\bf Proof of Proposition \ref{fiber bunching from periodic}}.
The proof relies on the following result on subadditive sequences.
Let $f$ be a homeomorphism of a compact metric space $X$.
A sequence of continuous functions $a_n : X \to \R$ is called
{\em subadditive}\, if 
$$
  a_{n+k}(x) \le a_k(x) +  a_n(f^k x) 
\quad\text{for all }x\in X \text{ and  }n,k\in \N.
$$
Let $\mu$ be an $f$-invariant Borel  probability measure on $X$ and let
$a_n(\mu)= \int_X a_n d\mu$. Then $a_{n+k}(\mu) \le a_n(\mu) + a_k(\mu)$, 
i.e. the sequence 
of real numbers $\{ a_n (\mu) \}$ is subadditive. It is well known that
for such a sequence the following limit exists:
$$
 \chi(a,\mu) :=\lim_{n\to \infty} \frac {a_n (\mu)}n 
 = \inf_{n\in \N} \frac {a_n (\mu)}n.
$$
Also, by the Subaddititive Ergodic Theorem, if the measure $\mu$ is ergodic 
then 
$$
 \lim_{n\to \infty} \frac {a_n (x)}n= \chi(a,\mu) \quad
  \text{for $\mu$-almost all } x\in X.
$$ 

\begin{lemma}\cite[Proposition 4.9]{KS12} \label{subadditive}
Let $f$ be a homeomorphism of a compact metric space $X$
and  $a_n : X \to \R$  be a subadditive sequence of continuous functions.

If $\chi(a,\mu)<0$  for every ergodic invariant Borel probability measure 
$\mu$ for $f$, then there exists $N$ such that $a_N(x) <0$ for all $x \in X$.
\end{lemma}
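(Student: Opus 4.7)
The plan is to argue by contradiction. Suppose that for every $N$ the set $\{x:a_N(x)\ge 0\}$ is nonempty; equivalently $b_n:=\sup_{x\in X}a_n(x)\ge 0$ for all $n$ (the supremum is attained by continuity of $a_n$ on the compact space $X$). The sequence $b_n$ inherits subadditivity from $a_n$ (using that $f$ is a homeomorphism, $\sup_x a_k(f^n x)=\sup_x a_k(x)$), so by Fekete's lemma $\beta^*:=\lim_n b_n/n=\inf_n b_n/n$ exists and satisfies $\beta^*\ge 0$. My aim is to produce an $f$-invariant Borel probability measure $\nu$ with $\chi(a,\nu)\ge\beta^*\ge 0$; its ergodic decomposition will then contradict the hypothesis.

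To construct such a $\nu$, I pick $x_n\in X$ with $a_n(x_n)=b_n$ and form the empirical measures $\nu_n:=\tfrac{1}{n}\sum_{k=0}^{n-1}\delta_{f^k x_n}$. By Banach--Alaoglu, some subsequence $\nu_{n_j}$ converges weakly-$*$ to a Borel probability $\nu$, and the usual Krylov--Bogoliubov calculation shows $\nu$ is $f$-invariant.

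The heart of the argument is the estimate $\beta^*\le\chi(a,\nu)$. Fix $m\in\N$ and, for each shift $\ell\in\{0,\dots,m-1\}$, write $n_j-\ell=q_j m+r_j$ with $0\le r_j<m$. Iterating the subadditive relation on the decomposition of the orbit into an initial piece of length $\ell$, $q_j$ consecutive blocks of length $m$, and a final piece of length $r_j$, I obtain
\begin{equation*}
a_{n_j}(x_{n_j})\le a_\ell(x_{n_j})+\sum_{i=0}^{q_j-1}a_m\bigl(f^{\ell+im}x_{n_j}\bigr)+a_{r_j}\bigl(f^{\ell+q_j m}x_{n_j}\bigr).
\end{equation*}
Averaging over $\ell=0,\dots,m-1$ and dividing by $n_j$, the shifted $m$-blocks cover each orbit point of $x_{n_j}$ away from the endpoints exactly once, so the middle sums reassemble into $\int a_m\,d\nu_{n_j}$ times $1/m$; the boundary terms $a_\ell$ and $a_{r_j}$ contribute $O_m(1/n_j)$, where the implicit constant depends only on $m$ and $\max_{k\le m}\sup_{x}|a_k(x)|$. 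Passing to the limit $j\to\infty$ using weak-$*$ convergence against the continuous observable $a_m$, then taking the infimum over $m$, yields $\beta^*\le\chi(a,\nu)$.

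Finally, I apply the ergodic decomposition $\nu=\int\nu_\omega\,d\tau(\omega)$. The identity $\chi(a,\nu)=\int\chi(a,\nu_\omega)\,d\tau(\omega)$ follows from the Subadditive Ergodic Theorem together with dominated convergence (using $a_n/n\le\sup_x|a_1(x)|$ uniformly). By hypothesis $\chi(a,\nu_\omega)<0$ for $\tau$-a.e. $\omega$, and since the sets $\{\omega:\chi(a,\nu_\omega)<-1/k\}$ exhaust a set of full $\tau$-measure, the integral is strictly negative, contradicting $\chi(a,\nu)\ge 0$. I expect the averaging-over-offsets step in paragraph three to be the main technical obstacle: one must verify that the shifted $m$-block covers are essentially disjoint and that all boundary pieces drop out at rate $O(1/n_j)$. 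Everything else---Fekete's lemma, Banach--Alaoglu, Krylov--Bogoliubov, and ergodic decomposition---is standard soft analysis.
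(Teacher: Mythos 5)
The paper does not prove this lemma: it is imported verbatim from \cite[Proposition 4.9]{KS12} and used as a black box, so there is no in-paper argument to compare against. That said, your proof is correct, and it is in fact the standard argument for this kind of ``uniform subadditivity from negative exponents'' statement (variants appear in work of Schreiber, Sturman--Stark, Morris, Cao, and Kalinin--Sadovskaya): the contradiction hypothesis gives $b_n:=\sup_x a_n(x)\ge 0$; subadditivity of $(b_n)$ and Fekete give $\beta^*=\inf_n b_n/n\ge 0$; empirical measures along maximizers produce an invariant $\nu$; and the averaging-over-offsets computation yields $\beta^*\le \frac1m\int a_m\,d\nu$ for every $m$, hence $\beta^*\le\chi(a,\nu)$, which the ergodic decomposition then contradicts. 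The offset-averaging step you flagged as the likely obstruction is sound: for fixed $\ell$ the inner sum hits exactly the integers $k\equiv\ell\pmod m$ with $\ell\le k\le n_j-r_j(\ell)-m$, so summing over $\ell=0,\dots,m-1$ reproduces $\sum_{k=0}^{n_j-1}a_m(f^kx_{n_j})=n_j\int a_m\,d\nu_{n_j}$ up to at most $2m$ missing or extra terms, each bounded by $\sup_x|a_m(x)|$, hence $O_m(1)$ in total and $O_m(1/n_j)$ after dividing. The one place to tighten is the appeal to dominated convergence in the ergodic-decomposition step: $a_n/n$ is bounded \emph{above} by $\sup_x a_1(x)$ but not below (some ergodic components may have $\chi(a,\nu_\omega)=-\infty$), so dominated convergence does not apply as stated. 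What you actually need is only the inequality $\chi(a,\nu)\le\int\chi(a,\nu_\omega)\,d\tau(\omega)$, and this follows from (reverse) Fatou applied to the nonnegative functions $\omega\mapsto\sup_x a_1(x)-\frac1n\int a_n\,d\nu_\omega$; the right-hand side is then strictly negative by the hypothesis (a function that is $<0$ a.e.\ and integrable from above has strictly negative integral), giving the desired contradiction.
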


We will apply this result to the  sequence of functions
$$
  a_n(x)= \log \,(\| \B_x^n \| \cdot \| (\B_x^n)^{-1}\| \cdot (\nu^n_x)^\beta ).
$$
It is easy to verify that this sequence is subadditive.
To show that it satisfies the assumption of  Proposition \ref{subadditive},
we consider Lyapunov exponents of cocycles.

Let $\mu$ be an ergodic $f$-invariant measure, and let 
 $\lambda_+(\B, \mu)$ and $\lambda_-(\B, \mu)$ be
the largest and smallest Lyapunov exponents of $\B$ with 
respect to $\mu$. We recall that 
$$
  \lambda_+(\B, \mu)= \,\lim_{n\to \infty} \frac1n \log \| \B^n_x \| \quad\text{and}\quad
    \lambda_-(\B, \mu)= \,\lim_{n\to \infty} \frac1n \log \| (\B^n_x)^{-1} \|^{-1}
$$
for $\mu$ almost every $x\in \M$ (see \cite[Section 2.3]{BP},  for more details).

Let $p=f^k p$ be a periodic point for $f$. The 
largest and smallest Lyapunov exponents of $\B$ 
with respect to the invariant measure $\mu_p$ on the orbit of $p$ satisfy
$$
 \lambda_{\pm}(\B, \mu_p)= 
 \,\frac1k \log \,\left(\text{the largest/smallest $|$eigenvalue of }\B_p^k\,| \right).
$$
Since the matrices $\A^k_p$ and $\B^k_p$ are conjugate,
it follows that  $\lambda_{\pm}(\B, \mu_p)=\lambda_{\pm}(\A, \mu_p)$.

For the scalar cocycle $\nu^\beta$, 
$\, \lambda(\nu^\beta, \mu)=\int_\M \log \nu(x)^\beta\,d\mu$
by the Birkhoff Ergodic Theorem, in particular 
$\lambda(\nu^\beta, \mu_p)=\frac1{k} \log (\nu^k_p)^\beta$.
\vskip.1cm

Since the cocycle $\A$ is fiber bunched, there are numbers $L$ and $\theta<1$ 
such that
$$
\| \A_x^n \| \cdot \| (\A_x^n)^{-1}\| \cdot (\nu^n_x)^\beta < L\,\theta^n
$$
for every $x\in \M$ and $n\in \N$. It follows that 
$$
  \lambda_+(\A, \mu_p)-  \lambda_-(\A, \mu_p) +\lambda(\nu^\beta, \mu_p)  =
  \lim_{n\to \infty} \frac1n \log (\| \A_p^n \| \cdot \| (\A_p^n)^{-1}\| \cdot (\nu^n_p)^\beta )
  \le\log \theta <0,
$$
and hence 
$$
  \lambda_+(\B, \mu_p)-  \lambda_-(\B, \mu_p) +\lambda(\nu^\beta, \mu_p)  
  \le\log \theta <0.
$$

We consider the cocycle $\F=\B \oplus \nu$ over $f$.
By \cite[Theorem 1.4]{K11}, the Lyapunov exponents
 $\lambda_1 \le ... \le \lambda_d$ of $\F$ with respect 
 to an ergodic invariant measure $\mu$  (listed with multiplicities) 
 can be approximated 
by the Lyapunov exponents of $\F$ at periodic points. More precisely,
for any $\e >0$ there exists a periodic point $p \in \M$ for which the 
Lyapunov exponents $\lambda_1^{(p)} \le ... \le \lambda_d^{(p)}$ of $\F$ 
satisfy $|\lambda_i-\lambda_i^{(p)}|<\e$ for $i=1, \dots , d$.
\vskip.1cm

Thus for the sequence of functions 
$a_n(x)=\log \,(\| \B_x^n \| \cdot \| (\B_x^n)^{-1}\| \cdot (\nu^n_x)^\beta )$, 
$$
 \chi(a,\mu) \overset{def}{=}\lim_{n\to\infty} \frac{a_n(x)}{n}= 
 \lambda_+(\B, \mu)-  \lambda_-(\B, \mu) +\lambda(\nu^\beta, \mu)  < 0.
 $$
Now it follows from Lemma \ref{subadditive} that there exists 
$N$ such that $a_N(x)<0$ for all $x$, i.e.
\begin{equation}\label{B^N}
\| \B_x^N \| \cdot \| (\B_x^N)^{-1}\|\cdot (\nu^N_x)^\beta <1
\quad\text{for all }x\in \M.
\end{equation}
By continuity, there exists $\tilde \theta<1$
such that the left hand side of \eqref{B^N} is smaller than $\tilde \theta$ for
all $x$.
Writing  $n\in \N$  as $n=mN+r$, where $m\in \N\cup \{0\}$ and $0\le r<N$, 
we get 
\begin{equation}\label{eq theta}
  \| \B_x^n \| \cdot \| (\B_x^n)^{-1}\|\cdot (\nu^n_x)^\beta \le L\, \tilde \theta^{\,m},
  \,\text { where }\, L=\max_{x,r}\, (\| \B_x^r \| \cdot \| (\B_x^r)^{-1}\|\cdot (\nu^r_x)^\beta).
\end{equation}

The corresponding inequality  with $\hat\nu$  
is obtained similarly, and 
we conclude that the cocycle $\B$ is fiber bunched.
$\QED$

\vskip.2cm

The argument implies the following.

\begin{corollary} \label{periodic bunching} A cocycle $\B$ is fiber bunched if and only if 
there exists a number $\eta<0$ such that 
for every $f$-periodic point $p=f^k p$,
$$
\lambda_+(\B, \mu_p)-  \lambda_-(\B, \mu_p) +\lambda(\nu^\beta, \mu_p) =
  \,\frac1k \log \left(\frac{\text {largest $|$eigenvalue of }\B_p^k\,| }
  {\text{smallest $|$eigenvalue of }\B_p^k\,| } 
  \, (\nu_p^k)^\beta \right)<\eta
$$
and the corresponding enequality holds for $\hat \nu$.
\end{corollary}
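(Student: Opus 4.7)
The plan is to prove both implications; the forward direction is essentially immediate from the definition, and the converse repeats the argument of Proposition \ref{fiber bunching from periodic} with the periodic-data bound playing the role of the fiber bunching of an auxiliary cocycle.

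For the forward direction, suppose $\B$ satisfies \eqref{fiber bunched} with constants $\theta<1$ and $L$. For every periodic point $p=f^k p$ and every $m\in\N$, applying \eqref{fiber bunched} with $n=mk$ gives $\|\B_p^{mk}\|\cdot\|(\B_p^{mk})^{-1}\|\cdot(\nu_p^{mk})^\beta < L\,\theta^{mk}$. Taking logarithms, dividing by $mk$, and sending $m\to\infty$ converts the three factors on the left into $\lambda_+(\B,\mu_p)$, $-\lambda_-(\B,\mu_p)$, and $\lambda(\nu^\beta,\mu_p)$ respectively (using $\|M^m\|^{1/m}\to\rho(M)$ for a periodic-orbit return matrix, together with Birkhoff on $\log\nu$). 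Hence the sum is bounded above by $\log\theta$, and any $\eta\in(\log\theta,0)$ supplies the required uniform strict bound over all periodic orbits; the $\hat\nu$ inequality is identical.

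For the converse, assume the uniform periodic-data bound with some $\eta<0$, together with its $\hat\nu$ counterpart. Set
$$a_n(x)=\log\bigl(\|\B_x^n\|\cdot\|(\B_x^n)^{-1}\|\cdot(\nu_x^n)^\beta\bigr),$$
which is a subadditive sequence of continuous functions. Fix an arbitrary ergodic $f$-invariant Borel probability measure $\mu$ and consider the direct sum cocycle $\F=\B\oplus\nu^\beta$, whose Lyapunov spectrum packages the three summands above. By Kalinin's approximation theorem \cite[Theorem 1.4]{K11}, the Lyapunov exponents of $\F$ with respect to $\mu$ can be approximated to within any $\e>0$ by the exponents of $\F$ at a suitable periodic point $p$. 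The periodic-data hypothesis forces
$$\lambda_+(\B,\mu_p)-\lambda_-(\B,\mu_p)+\lambda(\nu^\beta,\mu_p)<\eta$$
uniformly in $p$, so letting $\e\to 0$ yields $\chi(a,\mu)=\lambda_+(\B,\mu)-\lambda_-(\B,\mu)+\lambda(\nu^\beta,\mu)\le\eta<0$ for every ergodic $\mu$. Lemma \ref{subadditive} then produces $N$ with $a_N(x)<0$ for all $x$, and the decomposition $n=mN+r$ exactly as in \eqref{eq theta} supplies $L$ and some $\theta'<1$ realising the first half of \eqref{fiber bunched}. The $\hat\nu$ side is treated symmetrically, yielding fiber bunching of $\B$.

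The only delicate point is the transfer from periodic spectra to arbitrary ergodic spectra, applied to the packaged cocycle $\F=\B\oplus\nu^\beta$ rather than to $\B$ alone; this step is essential because Lemma \ref{subadditive} needs control of $\chi(a,\mu)$ for every ergodic invariant measure, not merely on periodic orbits, and a naive application to $\B$ alone would omit the $\lambda(\nu^\beta,\mu)$ term. Once this reduction is in place, the remainder of the argument is routine bookkeeping already executed in the proof of Proposition \ref{fiber bunching from periodic}.
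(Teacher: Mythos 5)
Your proposal is correct and takes essentially the same route as the paper, which disposes of the corollary with the single remark ``The argument implies the following,'' referring back to the proof of Proposition \ref{fiber bunching from periodic}. You simply make both implications explicit: the forward direction by the elementary $n=mk$ limiting computation, and the converse by the same packaging into $\F=\B\oplus\nu$ (your $\nu^\beta$ is a trivial variant), Kalinin's periodic approximation, Lemma \ref{subadditive}, and the $n=mN+r$ decomposition.
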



\vskip.3cm

\subsection{Holonomies}
An important role in our arguments is played by holonomies.  We follow the notations and terminology form \cite{V,ASV} for 
linear cocycles.
\vskip.1cm

Let $\E=\M\times \R^d$ be a trivial vector bundle over $\M$. 
We view $\A_x$ as a linear map from $\E_x$, the fiber at $x$, 
to $\E_{fx}$,\, so  $\,\A_x^n: \E_x \to \E_{f^nx}\,$ and 
$\,\A_x^{-n}: \E_x \to \E_{f^{-n}x}$.

\begin{definition} \label{holonomies}
A {\em stable holonomy} for a linear cocycle $\A:\E \to \E$ is 
a continuous map $H^{\A,s}:\;(x,y)\mapsto H^{\A,s}_{x,\, y}$,\, 
where $x\in \M$, $y\in W^s(x)$, such that  
\begin{itemize}
\item[(H1)] $H^{\A,s}_{x,\,y}$ is a linear map from $\E_x$ to $\E_y$;
\vskip.1cm
\item[(H2)]  $H^{\A,s}_{x,\,x}=\Id\,$ and $\,H^{\A,s}_{y,\,z} \circ H^{\A,s}_{x,\,y}=H^{\A,s}_{x,\,z}$;
\vskip.1cm
\item[(H3)]  $H^{\A,s}_{x,\,y}= (\A^n_y)^{-1}\circ H^{\A,s}_{f^nx ,\,f^ny} \circ \A^n_x\;$ 
for all $n\in \N$.
\end{itemize}
\end{definition}

\noindent Condition (H2) implies that $(H^{\A,s}_{x,\,y})^{-1}=H^{\A,s}_{y,\,x}.$

\noindent The unstable holonomy $H^{\A,u}$ are defined 
similarly for $y\in W^u(x)$ with 
\vskip.2cm

(H3$'$)  $H^{\A,u}_{x,\,y}=
(\A^{-n}_y)^{-1} \circ H^{\A,u}_{f^{-n}x ,\,f^{-n}y}  \circ \A^{-n}_x\;$ 
 for all $n\in \N$.
\vskip.2cm

\noindent We consider holonomies which  satisfy the following H\"older condition:
\vskip.2cm 
(H4) $\;\;\| H^{\A,s(u)}_{\,x,y} - \Id \,\| \leq c\,\dist (x,y)^{\beta},
\; \text{ where }c  \text{ is independent of } x  \text{ and } y\in W^{s(u)}_{loc}(x).$
\vskip.2cm 

\noindent A local stable manifold $W^{s}_{loc}(x)$ is a ball in $W^{s}(x)$ centered 
at $x$  of a small radius $\rho$ in the intrinsic metric of $W^{s}(x)$.
We choose  $\rho$ small enough
 so that
 \eqref{Anosov def} ensures that $\| Df_y\| < \nu (x)$
for all $x \in \M$ and $y \in W^{s}_{loc}(x)$. Local unstable manifolds 
are defined similarly, and it follows that for all $n\in \N$,
\begin{equation}\label{dist}
\begin{aligned}
&\dist (f^nx, f^ny)< \nu^n_x \cdot  \dist(x,y) \quad \text{for all } x \in \M
\text{ and }y\in W^s_{loc}(x),\\
&\dist (f^{-n}x, f^{-n}y)< \hat \nu^{-n}_x \cdot  \dist(x,y) \quad \text{for all } x \in \M
\text{ and }y\in W^u_{loc}(x).
\end{aligned}
\end{equation}

\begin{proposition} \label{existence of holonomies} 
Suppose that a cocycle $\A$ is fiber bunched. 
Then $\A$ has unique stable  and unstable holonomies satisfying 
(H4). Moreover, for every $x\in \M$,
$$
\begin{aligned}
& H^{\A,s}_{x,y} =\underset{n\to\infty}{\lim} (\A^n_y)^{-1} \circ \A^n_x, 
 \quad  y\in W^s(x), \quad \text{and}\\
&H^{\A,u}_{x,\,y}\,=
\underset{n\to\infty}{\lim} \left( (\A^{-n}_y)^{-1} \circ (\A^{-n}_x) \right)
\,=
\underset{n\to\infty}{\lim} 
\left(\A^n_{f^{-n}y} \circ (\A^{n}_{f^{-n}x})^{-1} \right), \quad y\in W^u(x).
\end{aligned}
$$
  \end{proposition}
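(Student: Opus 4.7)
The plan is to build the stable holonomy as the limit $H^{\A,s}_{x,y} = \lim_{n\to\infty} (\A^n_y)^{-1}\circ\A^n_x$, first for $y\in W^s_{loc}(x)$ and then extend globally via (H3). Set $H^n_{x,y} = (\A^n_y)^{-1}\A^n_x$ and exploit the telescoping identity
$$H^{n+1}_{x,y} - H^n_{x,y} = (\A^n_y)^{-1}\,A(f^n y)^{-1}\bigl[A(f^n x) - A(f^n y)\bigr]\,\A^n_x.$$
Since $A$ is $\beta$-H\"older and $\M$ compact, $\|A^{-1}\|$ is uniformly bounded and $\|A(f^n x)-A(f^n y)\| \le c\,\dist(f^n x, f^n y)^\beta \le c\,(\nu^n_x)^\beta\,\dist(x,y)^\beta$ by \eqref{dist}. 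The remaining factor $\|(\A^n_y)^{-1}\|\cdot\|\A^n_x\|$ is then controlled through the fiber bunching hypothesis \eqref{fiber bunched}: writing $\|\A^n_x\| \le \|\A^n_y\|\cdot\|H^n_{x,y}\|$, applying \eqref{fiber bunched} at the point $y$, and using continuity of $\nu$ to compare $(\nu^n_x)^\beta$ with $(\nu^n_y)^\beta$ for $y\in W^s_{loc}(x)$, an induction on $n$ gives
$$\|H^{n+1}_{x,y} - H^n_{x,y}\| \le C\,L\,\theta^n\,\dist(x,y)^\beta.$$

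Summability then shows $\{H^n_{x,y}\}$ is uniformly Cauchy, converges to a continuous limit $H^{\A,s}_{x,y}$, and obeys
$$\|H^{\A,s}_{x,y} - \Id\| \le \frac{CL}{1-\theta}\,\dist(x,y)^\beta,$$
which is (H4). Properties (H1) and (H2) are immediate from the definition, while (H3) is obtained by passing to the limit in the identity $(\A^{n+m}_y)^{-1}\A^{n+m}_x = (\A^m_y)^{-1}\bigl[(\A^n_{f^m y})^{-1}\A^n_{f^m x}\bigr]\A^m_x$. For a global $y\in W^s(x)$, pick $m$ large enough that $f^m y\in W^s_{loc}(f^m x)$ and set $H^{\A,s}_{x,y} = (\A^m_y)^{-1}\circ H^{\A,s}_{f^m x,\,f^m y}\circ \A^m_x$; independence of $m$ is forced by (H3) for the local object.

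For uniqueness, suppose $\tilde H$ is another stable holonomy satisfying (H4). Iterating (H3) gives
$$\tilde H_{x,y} = (\A^n_y)^{-1}\,\A^n_x + (\A^n_y)^{-1}\bigl(\tilde H_{f^n x,\,f^n y} - \Id\bigr)\A^n_x.$$
By (H4) the error term $\|\tilde H_{f^n x, f^n y} - \Id\|$ decays like $(\nu^n_x)^\beta\,\dist(x,y)^\beta$, and fiber bunching then forces the second summand to vanish as $n\to\infty$, yielding $\tilde H_{x,y} = H^{\A,s}_{x,y}$. The unstable holonomy is produced by the same argument applied to $f^{-1}$; the two displayed limit formulas for $H^{\A,u}$ coincide because $(\A^{-n}_y)^{-1}\circ\A^{-n}_x = \A^n_{f^{-n}y}\circ(\A^n_{f^{-n}x})^{-1}$ by the cocycle property.

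The main obstacle will be the bookkeeping behind the bound on $\|(\A^n_y)^{-1}\|\cdot\|\A^n_x\|$: fiber bunching supplies a clean estimate only when both factors are evaluated at the same base point, so one must either run an induction establishing a uniform bound on $\|H^n_{x,y}\|$ in tandem with the Cauchy estimate, or compare $\nu^n_x$ with $\nu^n_y$ along the orbit. Both routes are standard but require care to avoid circularity, and this is the only place where fiber bunching is genuinely used.
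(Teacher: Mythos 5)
Your proposal is correct in outline, but it takes a genuinely different route from the paper. You re-derive the holonomy from scratch by a direct telescoping/Cauchy argument under the paper's (asymptotic) fiber bunching condition \eqref{fiber bunched}. The paper instead reduces to the stronger ``at each step'' condition \eqref{each step}: it observes that for $n$ large enough the iterated cocycle $\A^n$ over $f^n$ does satisfy \eqref{each step}, so existence and uniqueness of its holonomies follow from Viana/ASV and \cite{KS12}. It then shows that the holonomies of consecutive iterates $\A^N$ and $\A^{N+1}$ coincide (both equal the holonomy of $\A^{N(N+1)}$, by uniqueness), and from the two versions of (H3) deduces that this common object already satisfies (H3) for $\A$ itself. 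Uniqueness for $\A$ then follows trivially because any holonomy for $\A$ is one for $\A^N$. The limit formula is derived afterwards from (H3), (H4), and fiber bunching.

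What each buys: the paper's reduction is shorter and entirely sidesteps the issue you flag at the end --- comparing $\|(\A^n_y)^{-1}\|\cdot\|\A^n_x\|$ when fiber bunching is only stated pointwise in $x$ --- because once the holonomy is known to exist, the relation $\A^n_x = (H_{f^nx,f^ny})^{-1}\circ\A^n_y\circ H_{x,y}$ together with (H4) immediately gives $\|\A^n_x\|\asymp\|\A^n_y\|$ (the paper's \eqref{Ax Ay}). Your direct route has to establish this comparison simultaneously with the Cauchy estimate, which is why you need the bootstrap on $\|H^n_{x,y}\|$; moreover, to compare $(\nu^n_x)^\beta$ with $(\nu^n_y)^\beta$ along the orbit you need $\nu$ to be H\"older (not merely continuous) so that $\sum_k|\log\nu(f^kx)-\log\nu(f^ky)|$ converges, or you must pass to a slightly larger constant rate. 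These are standard but nontrivial refinements that the paper's iterate trick avoids entirely. Your argument is more self-contained; the paper's is slicker given that the each-step case is already in the literature. Both are valid.
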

  
\begin{proof} We will give the proof for  the stable holonomies.
The argument for the unstable holonomies is similar.
Under the fiber bunching condition ``at each step",
 \begin{equation}\label{each step}
\| \B_x\|\cdot \|\B_x^{-1}\| \cdot  \nu(x)^\beta <1 \quad\text{for all }x\in\M,
\end{equation}
existence of such holonomies was proved in \cite{V,ASV} and uniqueness 
in \cite{KS12}.
We indicate how to extend  these results to our setting.

Since the cocycle  $\A$ is fiber bunched
(in the sense of Definition \ref{fiber bunched}) and $\nu<1$, 
there exist $N\in \N$ such that for all  $n\ge N$ and $x\in \M$,
$
\,\| \A_x^n\|\cdot \|(\A_x^n)^{-1}\| \cdot  (\nu^n_x)^\beta < 1.\; 
$
Thus the cocycles $\A^n$ satisfy \eqref{each step}  and hence  have 
unique stable  holonomies.

The stable holonomies for $\A^N$ and $\A^{N+1}$
are also the stable holonomies for  $\A^{N(N+1)}$,
 and hence they coincide by uniqueness. Let $H=H^{\A^{N+1}, s}= H^{\A^N, s}$.
 Clearly, $H$ satisfies the properties (H 1,2,4).
 Also, since $H^{\A^{N+1}, s}$ and $H^{\A^N, s}$ satisfy (H3),
$$
 H_{x,y}=(\A^N_y)^{-1}\circ H_{f^Nx ,\,f^Ny} \circ \A^N_x
 = (\A^{N+1}_y)^{-1}\circ H_{f^{N+1}x ,\,f^{N+1}y} \circ \A^{N+1}_x.
 $$ 
 Hence
$$
H_{f^Nx ,\,f^Ny} = (\A_{f^N y})^{-1}\circ H_{f^{N+1}x ,\,f^{N+1}y} \circ \A_{f^N x},
$$
 and it follows that  $H$ satisfies (H3). The stable holonomy for $\A$ 
 satisfying (H4) is unique 
 since it is also a holonomy for $\A^N$. 
Thus $H=H^{\A,s}$, and  it remains to show that it equals the limit.
 
 By (H3),  
 $\,\A^n_x= (H_{f^n x, f^n y})^{-1 }\circ \A^n_y \circ H_{x,y}^{\A,s},\;$
and hence by (H4) there is a constant $c_1$ 
 such that 
\begin{equation} \label{Ax Ay}
  \|\A^n_x \|= c_1 \,\|\A^n_y\| \quad \text{for all }\;x\in \M, \;\;
  y\in W^s_{loc}(x), \text{ and }n\in \N.
\end{equation}
Hence 
$$
\| H^{\A,s}_{x,y} - (\A^n_y)^{-1} \circ \A^n_x \| \,=\,
\| (\A^n_y)^{-1}\circ (H^{\A,s}_{f^nx ,\,f^ny} -\Id ) \circ \A^n_x \| \,\le
$$
$$
\le \| (\A^n_y)^{-1}\| \cdot \| \A^n_y \| \cdot c\,\dist (f^nx ,\,f^ny)^{\beta} \,\le\,
c_2 \| (\A^n_y)^{-1}\| \cdot \| \A^n_y \| \cdot (\nu^n_y)^{\beta} \to 0
$$
as $n\to \infty\,$ by \eqref{dist} and fiber bunching.  
\end{proof}

\subsection{Relations between H\"older conjugacies and holonomies.}


\begin{proposition} \label{prop intertwines}
Let $\A$ and $\B$ be two fiber bunched cocycles 
and let  $C$ be a $\beta$-H\"older continuous  conjugacy between 
$\A$ and $\B$. Then 
\begin{itemize}
\item[(a)] $C$  intertwines the holonomies 
for $\A$ and $\B$, i.e. 
$$
H_{x,y}^{\A,\,s (u)}=C(y)\circ H_{x,y}^{\B,\,s (u)} \circ C(x)^{-1}
\quad\text{for every }x\in \M \text{ and }y\in W^{s (u)}(x).
$$

\item[(b)] $C$ conjugates the\, {\em periodic cycle functionals} of $\A$ and $\B$, i.e.
$$
H^{\A,s}_{y,x} \circ H^{\A,u}_{x,y}=C(x)\circ H^{\B,s}_{y,x}\circ H^{\B,u}_{x,y} \circ C(x)^{-1}\; 
$$
for every $x\in \M$ and  $y\in  W^s(x)\cap W^u(x)$.
\vskip.15cm

 \item[(c)] $C$ is uniquely determined by its value at one point.

\end{itemize}

\end{proposition}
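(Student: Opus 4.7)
My plan is to prove (a) first by a direct computation with the limit formulas of Proposition 4.3, then derive (b) by composition, and finally deduce (c) by applying (a) to a well-chosen auxiliary intertwiner together with a density argument.

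For part (a), I will work with the stable holonomies; the unstable case is symmetric. Using the cocycle relation $\A_x^n = C(f^n x) \circ \B_x^n \circ C(x)^{-1}$ and substituting into the limit
$H^{\A,s}_{x,y} = \lim_{n\to\infty} (\A^n_y)^{-1} \circ \A^n_x$, I obtain
\[
(\A^n_y)^{-1} \circ \A^n_x = C(y) \circ (\B^n_y)^{-1} \circ \bigl[ C(f^n y)^{-1} C(f^n x) \bigr] \circ \B^n_x \circ C(x)^{-1}.
\]
The goal is to show that the bracketed term may be replaced by $\Id$ in the limit, producing $C(y)\circ H^{\B,s}_{x,y}\circ C(x)^{-1}$. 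The error satisfies
\[
\bigl\| C(f^n y)^{-1} C(f^n x) - \Id \bigr\| \le c_0 \,\dist(f^n x, f^n y)^\beta \le c_1 (\nu^n_x)^\beta \dist(x,y)^\beta
\]
for $y \in W^s_{loc}(x)$ by H\"older continuity of $C$ and \eqref{dist}; combining with the leaf-wise norm comparison \eqref{Ax Ay} applied to $\B$ (so that $\|\B^n_x\|\cdot \|(\B^n_y)^{-1}\|$ is bounded by a constant times $\|\B^n_y\|\cdot\|(\B^n_y)^{-1}\|$) and fiber bunching of $\B$, the total error tends to zero. For a general $y \in W^s(x)$, I pick $k$ with $f^k y \in W^s_{loc}(f^k x)$, apply the argument at $f^k x$, and pull the identity back using (H3) for both $H^{\A,s}$ and $H^{\B,s}$.

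Part (b) is an immediate algebraic consequence of (a) applied to both the stable and unstable holonomies at a point $y \in W^s(x) \cap W^u(x)$: substituting the intertwining identities into $H^{\A,s}_{y,x} \circ H^{\A,u}_{x,y}$ the middle factors $C(y)^{-1}C(y)$ cancel, leaving $C(x) \circ H^{\B,s}_{y,x} \circ H^{\B,u}_{x,y} \circ C(x)^{-1}$.

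For part (c), suppose $C_1, C_2$ are two $\beta$-H\"older continuous conjugacies with $C_1(x_0) = C_2(x_0)$, and set $E(x) = C_2(x)^{-1} C_1(x)$. A direct computation from $\A_x C_i(x) = C_i(fx) \B_x$ gives $E(fx) = \B_x \circ E(x) \circ \B_x^{-1}$, so $E$ is a $\beta$-H\"older conjugacy from $\B$ to itself. Applying part (a) with $\A$ and $\B$ both equal to $\B$ and conjugacy $E$ yields $E(y) = H^{\B,s}_{x,y} \circ E(x) \circ (H^{\B,s}_{x,y})^{-1}$ for $y \in W^s(x)$, and similarly along unstable leaves. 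Since $E(x_0) = \Id$, it follows that $E \equiv \Id$ on $W^s(x_0)$, and then on $W^u(z)$ for every $z \in W^s(x_0)$. Because $f$ is a transitive Anosov diffeomorphism, the stable leaf $W^s(x_0)$ is dense in $\M$, so $E \equiv \Id$ by continuity, giving $C_1 = C_2$. The main obstacle I anticipate is the convergence estimate in (a): one must simultaneously control the H\"older error of $C$ along the orbit, the norm growth/decay of $\B^n$ on nearby leaves, and the local-manifold restriction on \eqref{Ax Ay}, which is why the reduction via iterating forward to a local stable manifold is needed.
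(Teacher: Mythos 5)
Your proposal is correct and follows essentially the same route as the paper: part (a) by the identical substitution and error estimate using \eqref{Ax Ay} and fiber bunching of $\B$, part (b) by direct algebraic cancellation, and part (c) by propagating the value at $x_0$ along $W^s(x_0)$ via the holonomy intertwining from (a) and then invoking density together with continuity. Your reformulation of (c) via $E = C_2^{-1}C_1 \in Z(\B)$ is a cosmetic variant of the paper's direct formula $C(y) = H^{\A,s}_{x_0,y}\circ C(x_0)\circ H^{\B,s}_{y,x_0}$, but it rests on the same idea.
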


\begin{proof}
(a) Let $x\in\M$ and $y\in W^s(x)$. By iterating $x$ and $y$ forward
the problem  reduces  to the case of $y\in W^s_{loc}(x)$.
Since $\A(x)=C(fx) \circ \B_x \circ C(x)^{-1}$,
we have
\begin{equation}\label{AA}
\begin{aligned}
&(\A^n_y)^{-1} \circ  \A^n_x 
\,=\, C(y) \circ (\B^n_y)^{-1} \circ C(f^n y)^{-1} \circ  C(f^nx) \circ \B^n_x
\circ  C(x)^{-1}= \\
&= C(y)\circ (\B^n_y)^{-1} \circ (\Id +r_n) \circ \B^n_x\circ C(x)^{-1}= \\
&= C(y) \circ (\B^n_y)^{-1} \circ \B^n_x\circ C(x)^{-1} + 
C(y)\circ (\B^n_y)^{-1} \circ r_n \circ \B^n_x \circ C(x)^{-1}. 
\end{aligned}
\end{equation}
H\"older continuity of $C$  and \eqref{dist} imply that
$$
\begin{aligned}
\| r_n\| & = \|C(f^n y)^{-1} \circ  C(f^nx)-\Id\|
\le \| C(f^n y)^{-1} \|\cdot \| C(f^nx)- C(f^ny)\| \le\\
& \le c_2 \,\dist (f^nx,\, f^ny)^\beta \le c_2 \,(\nu^n_y)^\beta.
\end{aligned}
$$
Using \eqref{Ax Ay},  the above estimate, 
and fiber bunching of the cocycle $\B$, we obtain
$$ 
\|(\B^n_y)^{-1} \circ r_n \circ \B^n_x \| \le 
\| (\B^n_y)^{-1} \| \cdot \| r_n\| \cdot c_3\,\|\B^n_y \|\le
$$ 
$$
 \le c_4\, \| ( \B^n_y)^{-1}\| \cdot \|  \B^n_y \| \cdot  (\nu^n_y)^\beta  \le c_5 \theta^n \to 0 
\quad\text{as }n\to \infty.
$$
Hence the second term in the last line of \eqref{AA} tends to 0.
Since $\underset{n\to\infty}{\lim} (\A^n_y)^{-1} \circ \A^n_x= H^{\A,s}_{x,y}\,$
and $\underset{n\to\infty}{\lim} (\B^n_y)^{-1} \circ \B^n_x= H^{\B,s}_{x,y},\,$
passing to the limit in \eqref{AA} we obtain (a).
 \vskip.1cm
 
The statement for the unstable holonomies is proven similarly and (b) follows immediately from (a).

\vskip.2cm
(c) Let $C(x_0)$ be given. By (a) for every $y\in W^s(x_0)$, the conjugacy at $y$ is given by
$$
  C(y)=H_{y,x}^{\B,s} \circ C(x_0) \circ H_{x,y}^{\A,s}. 
$$
Since the stable manifold $W^s(x_0)$ is dense in $\M$ and $C$ is H\"older continuous,
$C$ is uniquely determined on $\M$.
\end{proof}

\vskip.3cm


\subsection{Cocycles over a diffeomorphism with a fixed point.} $\;$
\vskip.1cm

\noindent {\bf Outline of the proof of Theorem \ref{conjugate  data}.}
Since the cocycle $\A$ is fiber bunched and $\B$ has 
conjugate periodic data, $\B$ is also fiber bunched by Proposition
\ref{fiber bunching from periodic}.
The  theorem then follows from Propositions  
\ref{equal functionals} and \ref{extends} below. 
Somewhat more directly, the argument can be outlined as follows.
We consider the cocycle $\tilde \B =C(p)\circ \B \circ C(p)^{-1}$, 
so that $\tilde \B_p =\A_p,$ and 
the function $\tilde C(q)=C(q) C(p)^{-1}$, so that $\tilde C(p)=\Id$.  
We construct conjugacies between $\A$ and $\tilde \B$
 along the stable and unstable manifolds of $p$ 
$$
\begin{aligned}
 &\tilde C^s(x)=H^{\A,s}_{p,\,x}\circ H^{\tilde \B,s}_{x,\,p} \quad\text{for }x\in W^s(p),\\
 &\tilde C^u(x)=H^{\A,u}_{p,\,x}\circ H^{\tilde \B,u}_{x,\,p} \quad\text{for }x\in W^u(p).
 \end{aligned}
$$
The proof of  Proposition \ref{equal functionals} shows
that if $x$ is a homoclinic point for $p$, i.e. $x\in W^s(p)\cap W^u(p)$,
then 
$$
H^{\A,s}_{x,p} \circ H^{\A,u}_{p,x}= H^{\tilde \B,s}_{x,p}\circ H^{\tilde \B,u}_{p,x},
\quad \text{i.e. }\; \tilde C^s(x)=\tilde C^u(x)\overset{def}=\tilde C(x).
$$
The proof of Proposition \ref{extends} shows that $\tilde C$ is $\beta$-H\"older continuous 
on the set
of homoclinic points, and hence it can be extended to $\M$.
$C(x)= \tilde C(x) C(p)$ is a conjugacy between $\A$ and $\B$,
and it is clear from the construction that it takes values in the 
closed subgroup $G_0$. 
Uniqueness follows from Proposition \ref{prop intertwines}(c).
$\QED$

\vskip.2cm
\noindent {\bf Assumptions.}  In Propositions \ref{equal functionals} and \ref{extends},
 the diffeomorphism $f$ has a fixed point $p$ and 
the cocycles $\A$ and $\B$ are fiber bunched.

\begin{proposition}  \label{equal functionals}
Suppose that  for each periodic point $q=f^k q$ in  a neighborhood 
$U$ of $p$ 
there is $C(q)\in GL(d,\R)$ such that
$$
\A^{k}_q=C(q)\circ \B^{k}_q \circ C(q)^{-1} \quad\text{and} \quad d(C(p), C(q))  \le c\, \dist (p,q)^\beta .
$$
Then $C(p)$ conjugates the  periodic cycle functionals of $\A$ and $\B$ at $p$, i.e.
 $$
  H^{\A,s}_{x,p} \circ H^{\A,u}_{p,x}=
  C(p)\circ H^{\B,s}_{x,p} \circ H^{\B,u}_{p,x}\circ C(p)^{-1}
  \quad\text {for every }x\in W^s(p)\cap W^u(p).
 $$
\end{proposition}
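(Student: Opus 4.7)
The strategy is to approximate the homoclinic point $x$ by periodic points via the Anosov closing lemma, apply the conjugacy hypothesis at these periodic points, and pass to the limit. By Proposition \ref{fiber bunching from periodic} the cocycle $\B$ is also fiber bunched, hence has its own holonomies via Proposition \ref{existence of holonomies}. Since $p$ is a fixed point and $x\in W^s(p)\cap W^u(p)$, both $f^{\pm n}x\to p$, so $\epsilon_n := \dist(f^{-n}x, f^n x)\to 0$. The Anosov closing lemma produces periodic points $q_n = f^{2n}(q_n)$ satisfying
\[\dist(f^j q_n,\, f^{-n+j}x)\,\le\, C_0\,\epsilon_n\,\theta_0^{\min(j,\,2n-j)},\qquad 0\le j\le 2n,\]
for some $\theta_0 < 1$. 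In particular $q_n\to p$ and $f^n q_n$ is exponentially close to $x$. For $n$ large, $q_n$ lies in the neighborhood $U$, so the hypothesis yields $\A^{2n}_{q_n} = C(q_n)\,\B^{2n}_{q_n}\,C(q_n)^{-1}$ with $C(q_n)\to C(p)$; applied to $p$ itself (fixed, of period $1$) it gives $\A_p = C(p)\,\B_p\,C(p)^{-1}$, hence $\A^n_p = C(p)\,\B^n_p\,C(p)^{-1}$ for every $n$.

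The key identification is an exact formula for the cycle functional: applying (H3) and (H3$'$) at the fixed point $p$ yields, for every $n$,
\[P^{\A}_{p,x}\,:=\,H^{\A,s}_{x,p}\circ H^{\A,u}_{p,x}\,=\,(\A^n_p)^{-1}\cdot H^{\A,s}_{f^n x,\,p}\cdot \A^{2n}_{f^{-n}x}\cdot H^{\A,u}_{p,\,f^{-n}x}\cdot (\A^n_p)^{-1},\]
and the analogous identity for $\B$. Since $f^{\pm n}x\to p$, continuity of the holonomies forces the two holonomy factors to tend to $\Id$, so $(\A^n_p)^{-1}\A^{2n}_{f^{-n}x}(\A^n_p)^{-1}\to P^{\A}_{p,x}$. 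The central claim I will establish is that the same limit is obtained when $f^{-n}x$ is replaced by the shadowing periodic orbit of $q_n$, i.e.\ $(\A^n_p)^{-1}\,\A^{2n}_{q_n}\,(\A^n_p)^{-1}\to P^{\A}_{p,x}$, and analogously for $\B$. Granting this, I substitute $\A^n_p = C(p)\,\B^n_p\,C(p)^{-1}$ into the normalized conjugacy at $q_n$, pull $C(p)$ and $C(p)^{-1}$ to the outside, and am left with $(\B^n_p)^{-1}\B^{2n}_{q_n}(\B^n_p)^{-1}$ flanked by residual conjugation factors of the form $(\B^n_p)^{-1}\bigl(C(p)^{-1}C(q_n)\bigr)\B^n_p$. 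Using $C(p)^{-1}C(q_n)-\Id = O(\dist(q_n,p)^\beta)$ balanced against $\|(\B^n_p)^{-1}\|\cdot\|\B^n_p\|\le L\,\theta^n(\nu^n_p)^{-\beta}$ from fiber bunching, these residual factors tend to $\Id$, and passing to the limit yields $P^{\A}_{p,x} = C(p)\,P^{\B}_{p,x}\,C(p)^{-1}$, as required.

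The main obstacle is the convergence $(\A^n_p)^{-1}\,\A^{2n}_{q_n}\,(\A^n_p)^{-1}\to P^{\A}_{p,x}$. I expect to prove it by writing $\A^{2n}_{q_n}$ and $\A^{2n}_{f^{-n}x}$ as products of the generator $A$ along their respective orbits and telescoping the difference: each term carries a Hölder-small factor $\|A(f^j q_n) - A(f^{-n+j}x)\|\le c\,(C_0\epsilon_n)^\beta\,\theta_0^{\beta\min(j,\,2n-j)}$ (from the closing lemma together with Hölder continuity of $A$), multiplied by two partial products of $\A$ of exponentially growing norm. The fiber bunching condition for $\A$, combined with the outer $(\A^n_p)^{-1}$ sandwich, is what turns this telescoped sum into a summable, vanishing bound --- in the same spirit as the final estimate in the proof of Proposition \ref{existence of holonomies}. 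A parallel subtlety in the companion estimate (showing the residual conjugation factors tend to $\Id$) may require decomposing $q_n - p$ into its stable and unstable components via the local product structure and applying both directions of the fiber bunching inequality separately.
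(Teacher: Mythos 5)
Your plan follows the same high-level template as the paper's proof --- the Anosov closing lemma to produce periodic orbits $q_n$ shadowing the homoclinic cycle, the conjugacy hypothesis applied at $q_n$, and fiber bunching to kill the resulting error terms --- and your handling of the residual factors $(\B^n_p)^{-1}\bigl(C(p)^{-1}C(q_n)\bigr)\B^n_p$ is essentially correct (using both the $\nu$-half and the $\hat\nu$-half of \eqref{fiber bunched} at the fixed point, as you anticipate). Your opening identity
$$
P^{\A}_{p,x}=\lim_{n\to\infty}(\A^n_p)^{-1}\circ\A^{2n}_{f^{-n}x}\circ(\A^n_p)^{-1}
$$
is also correct, but not for the reason you give: saying ``the holonomy factors tend to $\Id$'' is insufficient, because those holonomies are sandwiched between cocycle products whose norms grow; rather, factor the right-hand side as $\bigl[(\A^n_p)^{-1}\A^n_x\bigr]\circ\bigl[(\A^{-n}_x)^{-1}\A^{-n}_p\bigr]$ and read off $H^{\A,s}_{x,p}\circ H^{\A,u}_{p,x}$ directly from the two limit formulas in Proposition \ref{existence of holonomies}.

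The genuine gap is your ``central claim'' $(\A^n_p)^{-1}\,\A^{2n}_{q_n}\,(\A^n_p)^{-1}\to P^{\A}_{p,x}$, which you propose to establish by telescoping $\A^{2n}_{q_n}-\A^{2n}_{f^{-n}x}$. Each term in that telescoping is a one-step difference $A(f^jq_n)-A(f^{-n+j}x)$ flanked on the left by $(\A^n_p)^{-1}\A^{2n-j-1}_{f^{j+1}q_n}$ and on the right by $\A^j_{f^{-n}x}(\A^n_p)^{-1}$. Because of the double $(\A^n_p)^{-1}$ sandwich, a crude bound on the flanking products gives something of order $K_\A(p,n)^2$; multiplied by the H\"older-small factor and summed over $\sim 2n$ indices $j$, this does \emph{not} obviously vanish under fiber bunching, which only trades one factor $K_\A\cdot(\nu^n)^\beta$ (or $K_\A\cdot(\hat\nu^{-n})^\beta$) per inequality. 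You would need a sharp $j$-dependent accounting that pairs exactly one $K$-factor with each H\"older-small increment, and it is not clear the raw telescoping (with orbits that merely shadow each other metrically, not lying on a common stable or unstable leaf) delivers that. The paper sidesteps this by introducing the intermediate point $z=W^s_{loc}(q)\cap W^u_{loc}(x)$ and using (H3), (H3$'$) to write, e.g.,
$$
\B^{n}_x= H^{\B,u}_{f^{n}z,\,f^{n}x}\circ H^{\B,s}_{f^{n}q,\,f^{n}z}\circ
\B^n_q \circ H^{\B,s}_{z,\,q} \circ H^{\B,u}_{x,\,z},
$$
so that each cocycle product at a shadowed point equals a periodic-orbit product conjugated by holonomies that are exactly $\Id+O\bigl(\max\{\nu^n,\hat\nu^{-n}\}^\beta\bigr)$; then the final bound pairs each such $R^n_i$ with a single $\|\B^n\|\cdot\|(\B^n)^{-1}\|$ and fiber bunching yields $\le c\,\theta^n$. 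If you want to keep your normalization (carrying $C(p)$ along rather than conjugating $\B$ into $\tilde\B$ with $\tilde\B_p=\A_p$), you can, but the comparison step should be done via this holonomy factorization rather than by telescoping.
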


The next proposition describes a sufficient condition
for a conjugacy at a fixed point to  extend to a conjugacy between cocycles. 

\begin{proposition}\label{extends}
 Let  $\,C_p\in GL(d,\R)$ be such that 
\begin{itemize}
\item[(a)] $\;\A_p=C_p \circ \B_p \circ C_p^{-1} $ and
\vskip.1cm
\item[(b)]  $\;H^{\A,s}_{x,p} \circ H^{\A,u}_{p,x}=C_p\circ H^{\B,s}_{x,p}\circ H^{\B,u}_{p,x} \circ C_p^{-1}\;$ for every  $x\in W^s(p)\cap W^u(p)$.
\end{itemize}
Then there exists a unique $\beta$-H\"older continuous conjugacy $C(x)$ between
$\A$ and $\B$ such that $C(p)=C_p$. Moreover, if $\A$ and $\B$
take values in a closed subgroup $G_0$ of $GL(d,\R)$ and $C_p\in G_0$,
then $C(x)\in G_0$ for all $x$.
\end{proposition}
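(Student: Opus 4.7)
The plan is to construct $C$ separately along the stable and unstable manifolds of $p$ using $C_p$ and the holonomies, show the two constructions agree on the homoclinic points of $p$, and extend the resulting function to $\M$ by density once uniform Hölder regularity is established.

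For $x\in W^s(p)$ I would set $C^s(x):=H^{\A,s}_{p,x}\circ C_p\circ H^{\B,s}_{x,p}$, and for $x\in W^u(p)$ set $C^u(x):=H^{\A,u}_{p,x}\circ C_p\circ H^{\B,u}_{x,p}$. Since $f(p)=p$, specializing (H3) with $n=1$ gives $H^{\B,s}_{fx,p}\circ\B_x=\B_p\circ H^{\B,s}_{x,p}$ and the analogue for $\A$; combined with hypothesis (a) that $\A_p=C_p\circ\B_p\circ C_p^{-1}$, a short computation yields $\A_x=C^s(fx)\circ\B_x\circ C^s(x)^{-1}$ on $W^s(p)$, so $C^s$ is a conjugacy along every stable leaf meeting $W^s(p)$, and likewise $C^u$ along every unstable leaf. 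Next, for any $x\in X:=W^s(p)\cap W^u(p)$ the equation $C^s(x)=C^u(x)$ is, after moving $C_p$ to one side, exactly hypothesis (b); so a single $C(x):=C^s(x)=C^u(x)$ is well-defined on $X$, and is a conjugacy there. Transitivity of $f$ makes $W^s(p)$ and $W^u(p)$ each dense in $\M$, and their transverse intersections form a dense subset, so $X$ is dense.

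The main work is to show $C$ is uniformly $\beta$-Hölder on $X$. For nearby $x,y\in X$, local product structure produces $z=[x,y]\in W^s_{loc}(x)\cap W^u_{loc}(y)$ with $\dist(x,z)+\dist(y,z)\le c_0\,\dist(x,y)$; the relations $z\in W^s(x)\subset W^s(p)$ and $z\in W^u(y)\subset W^u(p)$ ensure $z\in X$. By (H2), $C(z)=H^{\A,s}_{x,z}\circ C(x)\circ H^{\B,s}_{z,x}$, and then (H4) gives
$$
d\bigl(C(x),C(z)\bigr)\,\le\, c_1\,\dist(x,z)^\beta\bigl(\|C(x)\|+\|C(x)^{-1}\|\bigr),
$$
with the analogous bound for $d(C(z),C(y))$ via unstable holonomies. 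A finite cover of $\M$ by charts of small enough diameter, combined with this estimate applied at reference points of $X$ inside each chart, would rule out blow-up of $\|C\|$ and $\|C^{-1}\|$ on $X$; reinserting the resulting uniform bound yields a uniform Hölder constant on $X$.

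Finally, density of $X$ and uniform $\beta$-Hölder continuity of $C|_X$ give a unique extension to a $\beta$-Hölder map $C:\M\to GL(d,\R)$, and continuity plus density propagate the conjugacy relation to all of $\M$; uniqueness given $C(p)=C_p$ is Proposition \ref{prop intertwines}(c). For the subgroup statement, the limit formulas of Proposition \ref{existence of holonomies} show that the holonomies of $G_0$-valued cocycles take values in $G_0$, so $C^s(x),C^u(x)\in G_0$ whenever $C_p\in G_0$, and this is preserved under the Hölder extension since $G_0$ is closed. I expect the third step to be the main obstacle: the leafwise Hölder estimate on $X$ already has a constant involving $\|C\|$ itself, so the delicate part is turning this into an unconditional uniform estimate via the compactness--covering argument.
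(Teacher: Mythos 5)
Your proposal matches the paper's proof step for step: the same leafwise definitions $C^s,C^u$ via holonomies, verification of the conjugacy relation along $W^s(p)$ and $W^u(p)$ using (H3) and hypothesis (a), identification of $C^s=C^u$ on the homoclinic set $X$ via hypothesis (b), the Hölder estimate via the bracket point $z=[x,y]$, boundedness of $\|C\|,\|C^{-1}\|$ on $X$ via a finite $\epsilon$-net of $X$ (the "finite cover" idea you sketch is precisely the paper's device), extension to $\M$ by density, the subgroup claim via the limit formulas for holonomies, and uniqueness from Proposition \ref{prop intertwines}(c). The step you flag as delicate — upgrading the Hölder estimate whose constant depends on $\|C\|$ to an unconditional uniform bound — is exactly the point the paper resolves with the net argument, so your instinct about where the work lies is correct and the approach you outline closes the gap.
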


We note that the first assumption on $C_p$ is obviously necessary,
and so is the second one by Proposition \ref{prop intertwines} (b).
Thus a conjugacy $C_p$ between the matrices $\A_p$ and $\B_p$ extends
to a conjugacy between cocycles if and only if (b) is satisfied.

\vskip.3cm

\noindent {\bf Proof of Proposition \ref{equal functionals}.}
First we modify the cocycle $\B$ so that the two cocycles coincide 
at the fixed point $p$.
We define the cocycle $\tilde \B$ and the function $\tilde C(q)$  
 by
$$
\tilde \B_x= C(p)\circ \B_x \circ C(p)^{-1} \quad\text{and}\quad
\tilde C(q)= C(q) C(p)^{-1}, \quad q\in U.
$$
The cocycle $\tilde \B$ is fiber bunched  and $\tilde \B_p =\A_p.$ Also,  
$ \A^k_q=\tilde C(q)\circ \tilde \B^k_q \circ \tilde C(q)^{-1}$ and 
$$
 d(\tC(q), \Id)  \le \tilde c\, \dist (p,q)^\beta 
 \; \text{ for all }q\in U.
$$
We prove that for every 
$x\in W^s(p)\cap W^u(p),\,$ 
 $$
H^{\tilde \B,u}_{p,\,x} \circ H^{\tilde \A,u}_{x,\,p} \circ H^{\tilde \A,s}_{p,\,x} \circ H^{\tilde B,s}_{x,\,p} =\Id.
$$
By Proposition \ref{existence of holonomies}, 
$\,H^{\tilde \B} =
C(p)\circ H^{\B} \circ C(p)^{-1}$, and Proposition \ref{equal functionals} follows.

\vskip.2cm

In the rest of the proof, we write $\B$ for $\tB$
and $C$ for $\tC$ to simplify the notations, and   
we fix $x\in W^s(p)\cap W^u(p).\,$  By Proposition \ref{existence of holonomies},
$$
H^{\A,s}_{p,\,x} \circ H^{\B,s}_{x,\,p} =
\underset{n\to\infty}{\lim}
 \left(  (\A^n_x)^{-1} \circ \A^n_p \circ (\B^n_p)^{-1} \circ  \B^n_x \right)=
 \underset{n\to\infty}{\lim}
 \left(  (\A^n_x)^{-1}\circ  \B^n_x \right)
$$
since  $\B^n_p = \A^n_p$.\, Similarly, 
$$
H^{\B,u}_{p,\,x} \circ H^{\A,u}_{x,\,p} =
 \underset{n\to\infty}{\lim}
 \left(   \B^n_{f^{-n}x} \circ  (\A_{f^{-n}x}^n)^{-1} \right).
$$
Thus,
$$
H^{\B,u}_{p,\,x} \circ H^{\A,u}_{x,\,p} \circ H^{\A,s}_{p,\,x} \circ H^{\B,s}_{x,\,p}
 =  \underset{n\to\infty}{\lim}
 \left(  \B^n_{f^{-n}x} \circ  (\A_{f^{-n}x}^n)^{-1} \circ  (\A^n_x)^{-1}\circ  \B^n_x  \right),
 $$
 and we will show that the limit on the right hand side equals the identity.
\vskip.1cm

 Since $x\in W^s(p)\cap W^u(p)$, by \eqref{Anosov def} 
there is a constant $c_1=c_1(x)$ such that 
$$ 
\dist (f^nx,\, p)< \nu^n_x \cdot c_1 \dist_{W^s(p)}(x,p)  \;\text{ and }\;
\dist (f^{-n}x, \,p)< \hat\nu^{-n}_x\cdot c_1 \dist_{W^u(p)}(x,p),
$$
  and hence 
$$\dist (f^nx,\, f^{-n}x)<c_2 \, \max \{\nu^n_x,\hat\nu^{-n}_x \} .
$$
Therefore, for all sufficiently large $n$ we can  apply  Anosov Closing Lemma 
to the orbit segment $\{f^i (x), \; i=-n, \dots , n\}$ \cite[Theorem 6.4.15]{KH}.
Thus there exists a periodic point $q=f^{2n}q\,$ such that 
$$
 \dist (f^i x,\, f^i q)\le c_3
  \max \{\nu^n_x,\hat\nu^{-n}_x \} \quad \text{for }i=-n,\, \dots, n.
$$
Additionally, we assume that $n$ is large enough so that
$f^{-n}q\in U$.
\vskip.1cm

Now we express  $\B^n_{f^{-n}x},$ 
$(\A_{f^{-n}x}^n)^{-1}\circ(\A^n_x)^{-1}$, and  $\B^n_x  $ in terms of the values 
of the cocycles at the corresponding iterates of $q$.
To use the holonomies, we consider the point
$$ 
z=W^s_{loc}(q)\cap W^u_{loc}(x).
$$ 
It is easy to see that for $i=-n, \dots , n$,
\begin{equation}\label{close}
\dist (f^i z,\, f^i x)\le c_4 \max \{\nu^n_x,\,\hat\nu^{-n}_x \} \;\text{ and }\;
\dist (f^i z,\, f^i q)\le c_4  \max \{\nu^n_x,\,\hat\nu^{-n}_x \}.
\end{equation}
Since $f^i z\in W^u_{loc}(f^i x)$ and $f^i z\in W^s_{loc}(f^i q)$, 
by the properties (H3) and (H3$'$) we have
$$
\B^{n}_x=  H^{\B,u}_{f^{n}z,\,f^{n}x}\circ \B^n_z \circ H^{\B,u}_{x,\,z}=
  H^{\B,u}_{f^{n}z,\,f^{n}x}\circ H^{\B,s}_{f^{n}q,\,f^{n}z}\circ 
\B^n_q \circ H^{\B,s}_{z,\,q} \circ H^{\B,u}_{x,\,z}.
$$
It follows from (H4) that 
$$
H^{s, \B}_{z,q}=\Id+R^{s, \B}_{z,q}, \quad\text{where }\;
\|R^{s, \B}_{z,q}\| \le c \,\dist(z,q)^\beta \le c_5  (\max \{\nu^n_x,\,\hat\nu^{-n}_x \})^\beta,
$$
and similar estimates hold for the other holonomies  due to \eqref{close}.
Thus we obtain 
\begin{equation}\label{r}
\B^{n}_x=(\Id +R_1^n)\circ \B^n_q \circ (\Id +R_2^n), \quad\text{where }
\|R_1^n\|, \|R_2^n\|  \le c_6  (\max \{\nu^n_x,\hat\nu^{-n}_x \})^\beta.
\end{equation}
Similarly,
\begin{equation} \label{1}
\B^n_{f^{-n}x}=(\Id +R_3^n)\circ\B^n_{f^{-n}q}\circ (\Id +R_4^n)
\end{equation}
\begin{equation}\label{2}
(\A_{f^{-n}x}^n)^{-1}\circ(\A^n_x)^{-1}= (\A^{2n}_{f^{-n}x})^{-1}=
(\Id +R_5^n)\circ(\A^{2n}_{f^{-n}q})^{-1}\circ (\Id +R_6^n).
\end{equation}
\vskip.1cm

Since $f^{-n}q\,$ is a point of period $2n$ in the neighborhood 
$U$ of $p$, by the assumption 
there exists $C(f^{-n}q)$ such that 
\begin{equation} \label{3}
\begin{aligned}
 &\A^{2n}_{f^{-n}q}=C(f^{-n}q) \circ \B^{2n}_{f^{-n}q} \circ C(f^{-n}q)^{-1},
 \quad\text{where } \\
 &C(f^{-n}q)=\Id+R_7^n  \quad\text{ and } C(f^{-n}q)^{-1}=\Id+R_8^n \quad\text{ with }\\
 & \|R_7^n\|, \; \|R_8^n\|, \le c_7\, \dist (p,f^{-n}q)^\beta  \le c_8 (\max \{\nu^n_x,\hat\nu^{-n}_x \})^\beta.
 \end{aligned}
\end{equation}
Using  \eqref{2} and \eqref{3} and combining terms of type $\Id+R^n_i$,
we obtain
\begin{equation} \label{4}
(\A_{f^{-n}x}^n)^{-1}\circ(\A^n_x)^{-1}= (\Id +R_9^n)\circ(\B^{2n}_{f^{-n}q})^{-1}\circ (\Id +R_{10}^n)
\end{equation}

Finally \eqref{r}, \eqref{1}, and \eqref{4} yield 
\begin{equation}\label{estimate}
  \B^n_{f^{-n}x} \circ  (\A_{f^{-n}x}^n)^{-1} \circ  (\A^n_x)^{-1}\circ  \B^n_x   =
\end{equation}
 $$
= (\Id + R_3^n) \circ \B^n_{f^{-n}q} \circ  (\Id + R_{11}^n) \circ 
(\B_{f^{-n}q}^n)^{-1} \circ  (\B^n_q)^{-1} \circ
 (\Id +R_{12}^n)\circ \B^n_q \circ (\Id + R_2^n)
$$
$$
=\Id+ \B^n_{f^{-n}q} \circ   R_{11}^n \circ (\B_{f^{-n}q}^n)^{-1} +
(\B^n_q)^{-1} \circ  R_{12}^n\circ \B^n_q \,+
$$
 $$
 +\,\B^n_{f^{-n}q} \circ  R_{11}^n \circ (\B_{f^{-n}q}^n)^{-1} \circ  
 (\B^n_q)^{-1} \circ
R_{12}^n\circ \B^n_q\,+\text{ smaller terms},
$$
where 
$\;\| R^n_i \|\le  c_9 (\max \{\nu^n_x,\hat\nu^{-n}_x \})^\beta.\;$
Since the cocycle $\B$ is fiber bunched,
$$
 \|(\B^n_q)^{-1}\| \cdot  \| \B^n_q\| \cdot \|  R_i^n\| \le c_{10} \, \theta^n 
 \quad\text{and}\quad
 \|\B^n_{f^{-n}q}\| \cdot  \| (\B_{f^{-n}q}^n)^{-1}\| \cdot \|  R_i^n\| \le c_{11} \, \theta^n.
 $$
 Thus we conclude that 
 $$
   \B^n_{f^{-n}x} \circ  (\A_{f^{-n}x}^n)^{-1} \circ  (\A^n_x)^{-1}\circ  \B^n_x   =
   \Id +R^n, \quad \text{where }\; \|R^n\|\le c_{12}\,\theta^n \to 0 \text{ as }n\to \infty,
$$
and hence 
 $$
H^{\B,u}_{p,\,x} \circ H^{\A,u}_{x,\,p} \circ H^{\A,s}_{p,\,x} \circ H^{\B,s}_{x,\,p} =\Id.
$$
This completes the proof of Proposition \ref{equal functionals}. $\QED$

\vskip.4cm 

\noindent {\bf Proof of Proposition \ref{extends}.}
We define a conjugacy $C^s$ on the stable manifold of $p$,
\begin{equation}\label{C^s}
C^s(x)= H^{\A,s}_{p,x}\circ C_p\circ H^{\B,s}_{x,p} \quad\text{for }\,x\in  W^s(p).
\end{equation}
Clearly, $C(p)=C_p$.  Also,
$$
\begin{aligned}
\A^n_x &= H^{\A,s}_{p, \,f^n x}\circ \A^n_p \circ H^{\A,s}_{x,\,p}=
H^{\A,s}_{p, \,f^n x}\circ C_p\circ \B^n_p \circ C_p^{-1}\circ H^{\A,s}_{x,\,p}=\\
&= \,H^{\A,s}_{p, \,f^nx } \circ C_p \circ H^{\B,s}_{f^nx, \,p} 
\circ \B^n_x \circ H^{\B,s}_{p,\,x} \circ C_p^{-1}\circ H^{\A,s}_{x,\,p} \,=\,
C^s(f^nx) \circ \B^n_x \circ C^s(x)^{-1}.
\end{aligned}
$$
Similarly, we define a conjugacy $C^u$ along the unstable manifold of $p$,
$$
C^u(x)= H^{\A,u}_{p,x}\circ C_p\circ H^{\B,u}_{x,p} \quad\text{for }\,x\in  W^u(p).
$$
Let $X=W^u(p) \cap W^s(p)$ be the set of homoclinic points of $p$.
By the assumption (b), 
\begin{equation}\label{=}
C^s(x)=C^u(x)\overset{def}{=}C(x) \quad\text{ for every}\, x\in X.
\end{equation}
\vskip.2cm

The set of homoclinic points of $p$ is known to be dense in $\M$
\cite{Bo}. 
To extend the function $C$ from $X$ to $\M$, we show that  $C$ 
is H\"older continuous on  $X$. Let $x$ and $y$ be two sufficiently close 
points in $X$. We note that the distances between $x$ and $y$ 
along $W^s(p)$ and $W^u(p)$ can be large.
To make an estimate we consider the point 
$$
z=W^u_{loc}(x)\cap W^s_{loc}(y),
$$ 
which is also in $X$.
By the definition of $C=C^s$ and properties of holonomies,
$$
C(z)=  H^{\A,s}_{y,z}\circ C(y)\circ H^{\B,s}_{z,y} =
(\Id +R^{\A}_{y,z}) \circ C(y)\circ (\Id +R^{\B}_{y,z}),
$$
where $\|R^{\A}_{y,z}\|, \; \|R^{\B}_{y,z}\| \le c\,\dist (y,z)^\beta.\;$ 
Hence 
\begin{equation}\label{CC^s}
\begin{aligned}
& C(z) \circ C(y)^{-1} = (\Id +R^{\A,s}_{y,z} )\circ  C(y) \circ (\Id +R^{\B,s}_{y,z}) \circ C(y)^{-1}= \\
& =  \; \Id+R^{\A,s}_{y,z}+ C(y) \circ R^{\B,s}_{y,z} \circ C(y)^{-1} +
R^{\A,s}_{y,z} \circ  C(y) \circ R^{\B,s}_{y,z} \circ C(y)^{-1}.
\end{aligned}
\end{equation}

\noindent Similarly, using unstable holonomies, we obtain
\begin{equation}\label{CC^u}
\begin{aligned}
&C(x)\circ C(z)^{-1}= \\
&=\;\Id+R^{\A,u}_{x,z}+ C(z) \circ R^{\B,u}_{x,z} \circ C(z)^{-1} +
R^{\A,u}_{x,z} \circ  C(z) \circ R^{\B,u}_{x,z} \circ C(z)^{-1}.
\end{aligned}
\end{equation}

Now we show that $\|C\|$ and $\|C^{-1}\|$ are bounded on $X$.
We fix a small number $\e$
and choose a finite subset $Y$ of $X$ such that for each
$x\in X$  there is $y \in Y$ such that $\dist (x,y)\le \e$.
Since $Y$ is finite, there is a constant $M$ such that 
$$
  \|C(y)\| \le M \quad\text{and}\quad \|C(y)^{-1} \|\le M
  \quad\text{for all }y\in Y.
$$

Let $x\in X$, let $y \in Y$ be such that $\dist (x,y)\le \e$, and let 
$z=W^s_{loc}(x)\cap W^u_{loc}(y)$. Then multiplying both sides 
of  \eqref{CC^s} by $C(y)$ and estimating  the norm we see
that 
$$ \|C(z)\|\le (2+2M^2)M,
$$ 
assuming that $\e$ is sufficiently small so that $c\,\dist(x,z)^\beta<1$.
Now boundedness of $\|C(x) \|$ follows similarly from  \eqref{CC^u}.
One can obtain expressions for $C(z)^{-1}\circ C(y)$ and 
$C(x)^{-1}\circ C(z)$ similar to \eqref{CC^s} and \eqref{CC^u}
and  conclude that $\|C(x)^{-1} \|$ is also bounded on $X$.

Now it follows from \eqref{CC^s} and \eqref{CC^u} 
that for any sufficiently close $x,y$ in $X$
$$
\begin{aligned}
C(x)\circ C(y)^{-1}&=C(x)\circ C(z)^{-1}\circ C(z)\circ C(y)^{-1} =
\Id +R_{x,y},\\
&\text{where}\quad  \|R_{x,y}\| \le c'\dist (x,y)^\beta,
\end{aligned}
$$
and hence 
\begin{equation} \label{d(CC)}
\begin{aligned}
 & d(C(x),C(y))=\|C(x)-C(y)\|+ \|C(x)^{-1}-C(y)^{-1}\|\le\\
 &\le  \|C(x)C(y)^{-1}-\Id \|\cdot \|C(y)\|+
 \|C(x)^{-1}\|\cdot \|\Id-C(x)C(y)^{-1}\| \le \\ &\le 2c'  M'\,\dist(x,y)^\beta.
\end{aligned}
\end{equation}
Thus we can extend the function $C$ on $X$ to 
a $\beta$-H\"older continuous function on $\M$,
and 
$$
\A^n_x=C(f^nx) \circ \B^n_x \circ C(x)^{-1} \quad \text{for all }
x\in \M \text{ and }n\in \Z.
$$

\newpage

The conjugacy $C$ takes values in the closed subgroup $G_0$ by the construction:
the holonomies take values in $G_0$ by Proposition \ref{existence of holonomies},
hence so does the restriction of $C$ to $X$ by \eqref{C^s} and \eqref{=},
and thus so does  $C$.
Uniqueness of the conjugacy follows from Proposition \ref{prop intertwines}(c).
$\QED$


\vskip.5cm


\subsection {Centralizers of cocycles and connections to conjugacies.} $\;$
\vskip.1cm
 
\noindent The {\em centralizer}\, of a cocycle  of $\A$ is the set
 $$
    Z(\A)=\{ D:\M\to GL(d,\R)\; |\;\; \A_x=D(fx)\circ \A_x \circ D (x)^{-1} 
    \quad\text{for all }x\in \M \}.
 $$
We consider the  centralizer  in the $\beta$-H\"older category.
 
 It is easy to see that $Z(\A)$ is a group with respect to pointwise multiplication and that $Z(\A)$ is a subgroup of $Z(\A^{k})$ for all $\,k\ge 1$. 
\vskip.1cm

\begin{proposition} \label{centralizers stabilize}
For any fiber bunched cocycle $\A$ there exists  
$M\ge 1$ such that 
$$\;Z(\A^{MT}) = Z(\A^M)\;\text{ for all  }\;T\ge 1.
$$
\end{proposition}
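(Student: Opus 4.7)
Since $f$ is a transitive Anosov diffeomorphism, periodic points are dense in $\M$; I fix once and for all a periodic point $p=f^{k_0}p$, so that $p$ is a fixed point of every iterate $f^{k_0 N}$. For each $N\ge 1$ the cocycle $\A^{k_0 N}$ is $\beta$-H\"older and fiber bunched as a cocycle over the transitive Anosov diffeomorphism $f^{k_0 N}$ (both conditions pass to iterates). Proposition~\ref{prop intertwines}(c) therefore gives injectivity of the evaluation map
\[
Z(\A^{k_0 N}) \hookrightarrow GL(d,\R),\qquad D\longmapsto D(p).
\]
Because $p$ is fixed by $f^{k_0 N}$, the cocycle identity at $p$ forces this image to lie in the matrix centralizer $\mathrm{Cent}(\A^{k_0 N}_p):=\{X\in GL(d,\R):X\A^{k_0 N}_p=\A^{k_0 N}_p X\}$.

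My next step is to stabilize $\mathrm{Cent}(\A^{k_0 N}_p)$ in $N$. From $\A^{k_0 NT}_p=(\A^{k_0 N}_p)^T$ one sees that if $X$ commutes with $\A^{k_0 N}_p$ then it commutes with $\A^{k_0 NT}_p$, so along the divisibility partial order $\{\mathrm{Cent}(\A^{k_0 N}_p)\}_{N\ge 1}$ is an ascending family. Elementarily, each $\mathrm{Cent}(\A^{k_0 N}_p)$ is cut out of $GL(d,\R)$ by its linear centralizer algebra in $M_d(\R)$, so the chain is an ascending sequence of subspaces of a $d^2$-dimensional vector space and must stabilize. Hence there exists $N_*$ with
\[
\mathrm{Cent}(\A^{k_0 N_* T}_p)=\mathrm{Cent}(\A^{k_0 N_*}_p)\quad\text{for every } T\ge 1,
\]
and I set $M:=k_0 N_*$.

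The inclusion $Z(\A^M)\subset Z(\A^{MT})$ is immediate by iterating the conjugacy equation, so only the reverse direction requires work. Given $D\in Z(\A^{MT})$, the previous step forces $D(p)\in\mathrm{Cent}(\A^{MT}_p)=\mathrm{Cent}(\A^M_p)$. I would then verify that $C_p:=D(p)$ fulfils both hypotheses of Proposition~\ref{extends} for the pair $\A^M,\A^M$ over $f^M$ at the fixed point $p$: hypothesis (a) is the centralizer condition just recorded, and hypothesis (b) asks that $D(p)$ commute with each periodic cycle functional $H^{\A^M,s}_{x,p}\circ H^{\A^M,u}_{p,x}$ for $x\in W^s(p)\cap W^u(p)$, which is supplied by Proposition~\ref{prop intertwines}(b) applied to $D$ viewed as a conjugacy of $\A^{MT}$ with itself. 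The key compatibility is that the stable and unstable holonomies are unchanged under passage to iterates, $H^{\A,s/u}=H^{\A^M,s/u}=H^{\A^{MT},s/u}$, which follows from the limit formulas in Proposition~\ref{existence of holonomies} (subsequences of a convergent sequence share the same limit). Proposition~\ref{extends} then produces a $\beta$-H\"older conjugacy $C\in Z(\A^M)$ with $C(p)=D(p)$; since $Z(\A^M)\subset Z(\A^{MT})$, both $C$ and $D$ live in $Z(\A^{MT})$ and coincide at $p$, so the uniqueness in Proposition~\ref{prop intertwines}(c) forces $C=D$, giving $D\in Z(\A^M)$.

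The principal obstacle in this plan is the algebraic stabilization step: one needs that the tower of matrix centralizers $\{\mathrm{Cent}(g^N)\}$ stabilizes under divisibility in $GL(d,\R)$. As indicated above this reduces to the ascending chain condition for linear subspaces of $M_d(\R)$ and is therefore elementary; nonetheless it is the conceptual heart of the proposition, since it is the point at which the dynamical input (fiber bunching, plus the existence and uniqueness of H\"older holonomies via Propositions~\ref{existence of holonomies}, \ref{prop intertwines}, and \ref{extends}) is converted into a finiteness statement about the tower of cocycle centralizers.
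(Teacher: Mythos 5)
Your proof is correct and follows essentially the same route as the paper: pass to an iterate fixing a periodic point $p$, identify $Z(\A^k)$ via Propositions~\ref{prop intertwines} and \ref{extends} with a subset of matrices at $p$ cut out by the linear commutation constraint and the (iterate-independent) periodic-cycle-functional constraint, and use boundedness of dimensions in $M_d(\R)$ to stabilize. One small wording caveat: $\{\mathrm{Cent}(\A_p^{k_0 N})\}_N$ ordered by divisibility is a directed poset, not a totally ordered chain, so rather than invoking the ascending chain condition per se, it is cleanest to argue as the paper does — the dimensions are bounded by $d^2$, so they attain a maximum at some $N_*$, and then $\mathrm{Cent}(\A_p^{k_0 N_*})\subseteq\mathrm{Cent}(\A_p^{k_0 N_*T})$ together with maximality of the dimension forces equality for all $T\ge1$. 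Beyond that phrasing, your argument is a slightly more explicit spelling-out of the paper's: you construct the extension $C$ and then invoke uniqueness at $p$ to conclude $C=D$, whereas the paper folds this into the bijection between $Z(\bar\A^k)$ and the solution set of the two conditions at $p$.
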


\begin{proof}  We note that for every $k\ge 1$ the cocycle $\A^k$ are also fiber bunched. 

Let $p$ be a periodic point of $f$ of period $N$. Then it is a fixed point
for $f^N$, and we consider the iterate $\bar \A =\A^N$ 
over $f^N$.
An element $D$ of the centralizer of $\bar \A^k$
is a conjugacy between $\bar \A^k$ and itself.
Hence by Proposition~\ref{prop intertwines}, $D$ is uniquely determined by 
its value  at $p$.
By Proposition \ref{extends}, a matrix $D_p=D(p)$ extends to 
a H\"older conjugacy $D$ on $\M$ if and only if 
\vskip.25cm

 $\hskip1cm \bar \A_p^{k}=D_p \circ \bar \A_p^{k} \circ D_p^{-1}\; \;$ and
\vskip.1cm
 $\hskip1cm  H^{\bar \A^k,s}_{x,p} \circ H^{\bar \A^k,u}_{p,x}
 =D_p\circ H^{\bar \A^k,s}_{x,p}\circ H^{\bar \A^k,u}_{p,x} \circ D_p^{-1}\;$ for every  $x\in W^s(p)\cap W^u(p)$.

\vskip.25cm

\noindent The second condition is the same for all $k\ge 1$ since
the holonomies of $\bar \A$ coincide with the holonomies of $\bar \A^k$
by the uniqueness.

The first condition is equivalent to the system of linear equations 
$\bar \A_p^k \circ D_p=D_p \circ \bar \A_p^k$ in $d^2$ variables,
and hence the set of its solutions can be identified with a subspace $V_k$
of $\R^{d^2}$. Intersecting this set with $GL(d,\R)$ gives the
centralizer of the matrix $\bar \A^k_p$. 
The dimensions of the subspaces $V_k$ are bounded by $d^2$.
Let $L\ge 1$ be the smallest  number such that $\dim V_L=\max_k \dim V_k$.
Clearly $V_{L}\subseteq V_{LT}$, and hence $V_{L}= V_{LT}$.
Therefore,
$$
Z(\bar \A^{LT}) = Z(\bar \A^L) \quad\text{i.e.}\quad
Z(\A^{NL\cdot T}) = Z(\A^{NL})   \quad\text{for all }\;T\ge 1.
$$
\end{proof}

The following proposition is easy to verify.

\begin{proposition} \label{conj and Z}
Let $\,\mathcal{C}(\A, \B)$ be the set of conjugacies 
between $\A$ and $\B$, 
and let $\,C_1 \in \mathcal{C}(\A, \B)$.
Then $C_2 \in \mathcal{C}(\A, \B)$ \ if and only if $\;C_1C_2^{-1}\in Z(\A)$. Thus the conjugacy between $\A$ and $\B$ is unique up to 
an element of the centralizer and 
$\,\mathcal{C}(\A, \B)=Z(\A) C_1.$

\end{proposition}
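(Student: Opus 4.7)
The plan is to verify both implications by direct substitution of the cocycle conjugacy relation \eqref{conj eq} and the definition of the centralizer; no structural machinery is needed beyond formal computation in $GL(d,\R)$. Since this is a statement about an algebraic relation between maps $\M\to GL(d,\R)$, I would not invoke any of the dynamics (holonomies, fiber bunching, periodic data) developed above.

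For the forward direction, I start by fixing $C_1,C_2\in \mathcal{C}(\A,\B)$ and setting $D(x):=C_1(x)\circ C_2(x)^{-1}$. I would rewrite the conjugacy identity $\A_x=C_2(fx)\circ \B_x\circ C_2(x)^{-1}$ as $\B_x=C_2(fx)^{-1}\circ \A_x\circ C_2(x)$ and substitute it into $\A_x=C_1(fx)\circ \B_x\circ C_1(x)^{-1}$, obtaining
\[
\A_x=C_1(fx)\circ C_2(fx)^{-1}\circ \A_x \circ C_2(x)\circ C_1(x)^{-1}=D(fx)\circ \A_x\circ D(x)^{-1},
\]
which is precisely the defining relation for $D\in Z(\A)$.

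For the reverse direction, given $C_1\in \mathcal{C}(\A,\B)$ and $D\in Z(\A)$, I would set $C_2:=D^{-1}C_1$ (pointwise), and then compute
\[
C_2(fx)\circ \B_x\circ C_2(x)^{-1}
=D(fx)^{-1}\circ C_1(fx)\circ \B_x\circ C_1(x)^{-1}\circ D(x)
=D(fx)^{-1}\circ \A_x\circ D(x),
\]
which equals $\A_x$ by the centralizer relation rearranged. More generally, any $C_2$ satisfying $C_1C_2^{-1}\in Z(\A)$ is of this form, so $\mathcal{C}(\A,\B)=Z(\A)\cdot C_1$.

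There is no genuine obstacle here — the argument is purely formal and does not use fiber bunching or any previously established propositions. The only point of care is to keep track of the order in the non-abelian products (the $D$ appears on the left since the cohomology equation composes $C$ on the left in the fiber over $fx$ and $C^{-1}$ on the right over $x$), and to note that $C_1C_2^{-1}\in Z(\A)$ is the same condition as $C_2^{-1}C_1\in Z(\A)$ after conjugating by $C_1$, so the coset description $\mathcal{C}(\A,\B)=Z(\A)C_1$ is unambiguous.
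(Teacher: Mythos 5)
Your proof is correct and is exactly the direct verification the paper has in mind (the paper simply states the proposition is ``easy to verify'' and gives no proof). One small imprecision in your closing aside: the reason $C_1C_2^{-1}\in Z(\A)$ is equivalent to $C_2C_1^{-1}\in Z(\A)$, and hence why $\mathcal{C}(\A,\B)=Z(\A)C_1$ follows, is simply that $Z(\A)$ is a group and therefore closed under inverses, not a conjugation argument; in fact $Z(\A)$ need not be closed under conjugation by $C_1$ (conjugating by $C_1^{-1}$ sends $Z(\A)$ to $Z(\B)$).
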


\newpage

\subsection{Proof of Theorem \ref{equal data}} 

To obtain a fixed point,  we pass to an iterate of $f$.
Let $p_1$ be a periodic point of $f$ of period $N$.
We consider the diffeomorphism $f^N$
and the cocycles $\A^N$ and $\B^N$ over $f^N$.
Clearly these cocycles are $\beta$-H\"older continuous and 
fiber bunched.
Thus we apply Theorem \ref{conjugate data} with $C(p)=\Id$
and conclude that  there exists a $\beta$-H\"older continuos 
conjugacy $C_1$ between
 $\A^N$ and $\B^N$. 
It remains  show that there exists a conjugacy beween the original
 cocycles $\A$ and $ \B$ over $f$.
 
By Proposition \ref{centralizers stabilize} there exists $M$ such that
$Z(\A^{NM\cdot T}) = Z(\A^{NM})$ for every $T\ge 1$. 
We note that $C_1$ is also a conjugacy for $\A^{NM}$ and $\B^{NM}$.

It is known that any transitive Anosov diffeomorphism has periodic points 
of all sufficiently large periods.
We pick a periodic point $p_2$ of a period $K>1$ relatively
 prime with $MN$. As above, we obtain 
 a conjugacy $C_2$ for the cocycles $\A^K$ 
 and $\B^K$ over $f^K$. 
 Thus both $C_1$ and $C_2$ are H\"older conjugacies 
 for the cocycles $\A^{NMK}$ and $\B^{NMK}$ over $f^{NMK}$, and hence 
by Proposition \ref{conj and Z},  $C_1C_2^{-1} \in Z(\A^{NMK})= Z(\A^{NM})$. 
 Since $C_1$ is a conjugacy for $\A^{NM}$ and $\B^{NM}$,
$\;C_2$ is also a conjugacy for these cocycles.

Thus $C_2$ is a conjugacy for the cocycles over $f^{NM}$ and $f^K$,
where  $MN$ and $K$ are relatively prime.
Hence there exist integers $r$ and $s$ such that
$NMr+Ks=1$, and
it is easy to see  that $C_2$ is also a conjugacy 
for the cocycles $\A$ and $\B$ over $f$.

This completes the proof of the theorem. $\QED$
\vskip.3cm


\subsection{Proof of Theorem \ref{measurable conjugacy} }
Since the cocycle $\B$ is uniformly quasiconformal
(see Definition~ \ref{quasiconformal}),
it satisfies the fiber bunching condition \eqref{fiber bunched} with
$$
  L= \sup_{x, n} \, (\|\B^n_x\|\cdot \| (\B^n_x)^{-1}\| )\quad\text{and}\quad
  \theta=\max_x \,\nu(x).
$$

Let $C$ be a $\mu$-measurable conjugacy between $\A$ and $\B$.
First we show that $C$ intertwines holonomies of $\A$ and $\B$
on a set of full measure, i.e.
there exists a set $Y\subset \M$, $\,\mu(Y)=1$,  such that 
\begin{equation}\label{int meas}
H_{x,y}^{\A,s}=C(y)\circ H_{x,y}^{\B,s} \circ C(x)^{-1}
\quad\text{for all }x,y\in Y\; \text{ such that }y\in W^s(x),
\end{equation}
and a similar statement holds for the unstable holonomies.
\vskip.1cm

Let $x\in \M$ and $y\in W^s(x)$.
As in the proof of Proposition \ref{prop intertwines}(a), we obtain that 
\begin{equation}\label{AA2}
(\A^n_y)^{-1} \circ  \A^n_x =  C(y) \circ (\B^n_y)^{-1} \circ \B^n_x\circ C(x)^{-1} + 
C(y)\circ (\B^n_y)^{-1} \circ r_n \circ \B^n_x \circ C(x)^{-1}, 
\end{equation}
where
$$
\| r_n\|  \le \,\| C(f^n y)^{-1} \|\cdot \| C(f^nx)- C(f^ny)\| .
$$

Since $C$ is $\mu$-measurable, by Lusin's theorem there exists a 
compact set $S\subset \M$ with $\mu(S)>1/2$ such that $C$
is uniformly continuous on $S$ and hence $\|C\|$ and $\|C^{-1}\|$
are bounded on $S$.
Let $Y$ be the set of points in $\M$ for which the frequency of 
visiting $S$ equals $\mu(S)>1/2$. By Birkhoff Ergodic Theorem
$\mu(Y)=1$. If $x$ and $y$ are in $Y$, there exists a sequence $\{n_i\}$
such that $f^{n_i}x$ and $f^{n_i}y$ are in $Y$ for all $i$.
It follows that 
$$
  \|r_{n_i}\| \to 0 \quad \text{as }i\to\infty 
$$
and $\|C\|$, $\|C^{-1}\|$ are uniformly bounded on $\{x_{n_i}, \, y_{n_i}\}$.
The product 
$$ 
\|(\B^n_y)^{-1}\| \cdot \|\B^n_x \| \le
\| H^{\B,s}_{f^nx,\,f^ny}\|\cdot \|( \B^n_x)^{-1}\|\cdot \|H^{\B,s}_{x,y}\|\cdot \| \B^n_x \|
$$ 
is uniformly bounded since the cocycle $\B$ is uniformly quasiconformal.

Thus for every $x$ and $y$ in $Y$ such that $y\in W^s(x)$, the second 
term in \eqref{AA2} tends to 0 along a subsequence, and \eqref{int meas} follows.
The statement for the unstable holonomies is proven similarly.

Let $x,y\in Y$ and $y\in W^s_{loc} (x)$. Then by \eqref{int meas} 
$$
C(y) = H_{x,y}^{\A,s} \circ C(x)\circ H_{x,y}^{\B,s}.
$$
It follows as in the proof of Proposition \ref{extends}, \eqref{CC^s}, that 
$$
C(y) \circ C(x)^{-1} = \; \Id+R^{\A,s}_{x,y}+ C(x) \circ R^{\B,s}_{y,z} \circ C(x)^{-1} +
R^{\A,s}_{y,z} \circ  C(x) \circ R^{\B,s}_{x,y} \circ C(x)^{-1},
$$
where
$$
 \|R^{\A}_{x,y}\|, \; \|R^{\B}_{x,y}\| \le c\,\dist (x,z)^\beta.
$$
Since $C$ is bounded on $Y$, this implies that 
$$
 \|C(y) \circ C(x)^{-1} -\Id\|\le c_1\,\dist(x,y)^\beta,
$$
and it follows as in \eqref{d(CC)} that 
\begin{equation}\label{d}
d(C(x), C(y))\le c_2 \,\dist(x,y)^\beta,
\end{equation}
where $c_2$ does not depend on $x$ and $y$.
The same holds for any $x,y\in Y$ such that  $y\in W^u_{loc}(x)$. 
\vskip.2cm

We consider a small open set $U$ in $\M$ with a product structure,
i.e. 
$$
U=W^s_{loc}( x_0)\times W^u_{loc}( x_0)\overset{def}{=}\,
\{W^s_{loc}( x)\cap W^u_{loc}( y)\; | \; x\in W^s_{loc}( x_0), \;y\in W^u_{loc}( x_0)\}.
$$
Since the measure $\mu$ has local product structure,
$\mu$ is equivalent to the product of conditional measures on $W^s_{loc}( x_0)$
and $W^u_{loc}( x_0)$, and hence for $\mu$ almost all local 
stable leaves in $U$, the set of  points of 
$Y$ on the leaf has full conditional measure.  
Since $\mu$ has full support, the conditional measures on almost all leaves 
have full support.

Hence for any two  points $x$ and $z$ in $Y\cap U$ that lie on two such 
stable leaves, there exists a point 
$y\in W^s_{loc}(x)\cap Y$  such that 
$W^u_{loc}(y)\cap W^s_{loc}(z)$ is also in $Y\cap U$.
It follows from \eqref{d} and the local product structure of the stable 
and unstable manifolds that
$$
d(C(x), C(z))\le c_3 \,\dist(x,z)^\beta.
$$
This  estimate holds for all $x,z$ in a set of full measure
$\tilde Y \subset Y$.

Let $\bar Y = \bigcap_{n=-\infty}^{\infty} f^n(\tilde Y)$. 
Then $\bar Y$ is $f$-invariant and 
$A(x)=C(fx)\circ \B(x) \circ C(x)^{-1}$ for all $x\in \bar Y$. Since $\mu$ has
full support and $\mu(\bar Y)=1$, the set $\tilde Y$ is dense in $\M$. 
Hence we can extend $C$ 
from $\bar Y$ and obtain a H\"older continuous conjugacy
 $\tilde C$ on $\M$ that coincides with $C$ on a set of full measure. 

\vskip1cm


\section{An application: smooth conjugacy  to a small perturbation
for  Anosov automorphisms}  \label{localrig}

Let  $g$ be an  Anosov diffeomorphism of $\M$. If $f$ is
a diffeomorphism of $\M$ sufficiently $C^1$ close to $g$, then $f$ is also Anosov and it is
topologically conjugate to $g$, i.e. there exists a homeomorphism
$h$ of $\M$  such that
$$
g = h^{-1} \circ  f \circ h.
$$
Moreover, the conjugacy is unique when chosen near identity.
(See, e.g. ~\cite[Corollary 18.2.2]{KH}).
The conjugacy $h$ is only H\"older continuous in general, 
and it is important to find out when the diffeomorphisms $f$ and $g$ are 
{\em smoothly}\, conjugate. If  $h$ is a $C^1$ diffeomorphism,
then the derivatives of the return
maps of $f$ and $g$ at the corresponding periodic points are
conjugate. Indeed, differentiating $\,g^n = h^{-1} \circ  f^n \circ h\;$
at periodic points $p=f^n(p)$ yields
$$
 D_pg^n = (D_ph)^{-1} \circ D_{h(p)} f^n \circ D_ph
 \quad\text{whenever }\;p=f^n(p).
$$

A diffeomorphism $g$ is said to be {\em locally rigid}\, if for any
$C^1$-small perturbation $f$ the conjugacy of the derivatives at 
the periodic points   is sufficient for
$h$ to be $C^1$. 
The problem of local
rigidity has been extensively studied and  Anosov diffeomorphisms with
one-dimensional stable and unstable distributions were shown to be
locally rigid \cite{L0,LM, L1}. In general, this is not the case for
systems with higher-dimensional distributions  \cite{L1,L2}.
Positive results were established for certain classes of diffeomorphisms
that are conformal
on the full stable and unstable distributions, \cite{L2,KS03,L3,KS09}.
In a different direction, local rigidity was proved in \cite{G} for
an irreducible Anosov toral automorphism $L : \T^d \to \T^d$
with real eigenvalues of distinct moduli, as well as for some
nonlinear systems with similar structure. Recently,
this result was extended to a broad class of Anosov automorphisms.

\begin{theorem} \cite{GKS}\label{GKS theorem}
Let $L:\T^d\to\T^d$ be an irreducible Anosov automorphism
such that no three of its eigenvalues have the same modulus.
Let  $f$ be a $C^1$-small perturbation of $L$ such that the
derivative $D_pf^n$ is conjugate to $L^n$ whenever $f^n(p)=p$.
Then $f$ is $C^{1+\text{H\"older}}$ conjugate to $L$.
\end{theorem}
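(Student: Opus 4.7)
The idea is to translate the periodic data hypothesis into a cohomological statement about derivative cocycles, apply Corollary \ref{constant}, and then integrate the resulting conjugacy to upgrade $h$ from topological to smooth.

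By structural stability, $f$ is topologically conjugate to $L$ via a H\"older continuous homeomorphism $h$ of $\T^d$; assume $f\in C^{1+\a}$ so that $Df$ is H\"older. Using the canonical trivialization of $T\T^d$, introduce the $\b$-H\"older continuous $GL(d,\R)$-valued cocycle $\B$ over $L$ defined by $\B_x=D_{h(x)}f$, and compare it to the constant cocycle $\A\equiv L$ over $L$. Since $f$ is $C^1$-close to $L$, $\B$ is $C^0$-close to $\A$. The periodic data hypothesis immediately gives that $\A$ and $\B$ have conjugate periodic data: for every $p\in\T^d$ with $L^np=p$ the point $h(p)$ has period $n$ for $f$, so $\B^n_p=D_{h(p)}f^n$ is conjugate to $L^n=\A^n_p$.

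To invoke Corollary \ref{constant} with the fixed point $p_0=0\in\T^d$, the core technical input is a choice of conjugating matrices $\{C(p)\}$ at periodic orbits near $p_0$ that is $\b$-H\"older continuous at $p_0$. This is where the assumption that no three eigenvalues of $L$ have the same modulus enters: it forces the coarse spectral decomposition of $L$ into invariant subspaces of dimension one or two, each two-dimensional piece corresponding either to a complex conjugate pair (on which $L$ acts conformally) or to a pair $\{\la,-\la\}$, in both cases with a small, tractable centralizer in $GL(d,\R)$. For periodic $p$ near $p_0$ the matrix $D_{h(p)}f^n$ inherits the same block structure by standard spectral perturbation theory applied to $L^n$, and within each block the conjugator is canonical up to a finite-dimensional ambiguity that can be fixed by a uniform normalization. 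The resulting family $\{C(p)\}$ depends $\b$-H\"older continuously on $p$ at $p_0$, so Corollary \ref{constant} yields a $\b$-H\"older continuous map $C\colon\T^d\to GL(d,\R)$ with
\begin{equation*}
  D_{h(x)}f \,=\, C(Lx)^{-1}\circ L\circ C(x) \quad\text{for all }x\in\T^d.
\end{equation*}

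The final step is to upgrade this cocycle identity to smoothness of $h$. Formal differentiation of $f\circ h=h\circ L$ predicts $Dh_x=C(x)^{-1}$, so the task is to verify that $C(x)^{-1}$ is genuinely the derivative of $h$. I would establish, along each stable or unstable leaf of $L$ separately, that $h$ restricted to that leaf is $C^{1+\text{H\"older}}$ with derivative $C(x)^{-1}$: on a single leaf this reduces to a low-dimensional integration problem controlled by the fiber-bunching estimates of Section \ref{proofs} together with a Liv\v{s}ic-type regularity argument. Journ\'e's lemma then combines these transverse regularities along $W^s$ and $W^u$ to promote $h$ to a global $C^{1+\text{H\"older}}$ diffeomorphism of $\T^d$. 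The principal obstacle is the middle step: constructing the H\"older-at-$p_0$ family $\{C(p)\}$ in a way that handles the centralizer ambiguity. The known counterexamples to local rigidity \cite{L1,L2} come from precisely this kind of failure when eigenvalue moduli cluster, so the ``no three eigenvalues of the same modulus'' hypothesis is what makes this step possible.
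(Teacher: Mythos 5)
The paper does not give a self-contained proof of this theorem: it is imported from \cite{GKS} and the paper only sketches its strategy in Section \ref{localrig} while proving the companion Theorem \ref{rigidity theorem}. That strategy decomposes $E^{u,L}$ into coarse Lyapunov sub-bundles $E^L_i$ corresponding to distinct eigenvalue moduli; the ``no three eigenvalues of the same modulus'' hypothesis forces $\dim E^L_i\le 2$, and then conformality of $Df|E^f_i$ is obtained \emph{directly from the periodic data} via \cite[Theorem 1.3]{KS10}, a result special to dimensions one and two. Conformality along each sub-bundle is then used to show $h$ is $C^{1+\text{H\"older}}$ along the linear foliations, and Journ\'e's lemma finishes. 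Your proposal takes a genuinely different route: you try to upgrade the matrix conjugacies $C(p)$ at periodic points to a H\"older-at-$p_0$ family so that Corollary \ref{constant} can be invoked for the full cocycle.

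There is a real gap at precisely that step, and it is a gap the paper is at pains to avoid. Corollary \ref{constant} (and Theorem \ref{conjugate data} behind it) require H\"older regularity of $p\mapsto C(p)$ at a fixed point as an \emph{input hypothesis}; this is not assumed in Theorem \ref{GKS theorem}, and it does not follow from the spectral hypothesis. Your justification --- that the centralizer ambiguity on each one- or two-dimensional block ``can be fixed by a uniform normalization'' --- is not an argument: the centralizer of a conformal block is a two-parameter group, so the conjugator at each periodic orbit is determined only up to a coset, and producing a H\"older selection across infinitely many periodic orbits of unbounded period is essentially the difficulty the whole subject revolves around. The paper's own Example 2.7 of \cite{S12} shows that even \emph{bounded} $C(p)$ does not yield cohomology, and the counterexamples in \cite{L1,L2,PW} show that periodic conjugacy of the data alone is insufficient. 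If your normalization argument worked, Theorem \ref{GKS theorem} would simply be a special case of Theorem \ref{rigidity theorem} and the authors would not have needed two separate hypotheses and two separate conformality mechanisms. The correct mechanism in \cite{GKS}, which your sketch bypasses, is that in dimensions $\le 2$ uniform quasiconformality can be read off the periodic data directly by \cite[Theorem 1.3]{KS10}, with no H\"older choice of $C(p)$ ever being made. The final integration step in your sketch ($Dh = C^{-1}$) is also asserted rather than established; in \cite{GKS} the leafwise smoothness of $h$ comes from the conformal structure on each $E^f_i$, not from a global H\"older conjugacy of the full derivative cocycles, and the latter does not on its own imply differentiability of $h$.
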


We recall that an automorphism $L$ is called
to be {\em irreducible}\, if it has no rational invariant subspaces,
or equivalently if its characteristic polynomial is irreducible over~$\Q$. 
Examples in \cite{G}  show that irreducibility of $L$ is a  necessary
assumption for local rigidity except when $L$ is conformal on
the stable and unstable distributions.

Theorem \ref{conjugate  data} allows us to obtain 
an alternative sufficient condition for smoothness of the conjugacy to a 
small perturbation. 
Instead of the assumption on the eigenvalues of $L$ we make 
an assumption that  the conjugacy of the periodic data
of the cocycles $L=DL$ and $Df$ is H\"older continuous 
at a single periodic point.

\newpage

\begin{theorem} \label{rigidity theorem}
Let $L:\T^d\to\T^d$ be an irreducible Anosov automorphism
and let  $f$ be a $C^1$-small perturbation of $L$. 
Suppose that   for each periodic point $p=f^n (p)$ 
there is $C(p)$ such that
$D_p f^{n}=C(p)\circ L^n \circ C(p)^{-1}$ and 
$C(p)$ is H\"older continuous at a periodic point~$p_0$.
Then $f$ is $C^{1+\text{H\"older}}$ conjugate to $L$.
\end{theorem}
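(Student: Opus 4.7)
The plan is to reduce Theorem \ref{rigidity theorem} to Corollary \ref{constant} and then to promote the resulting H\"older cohomology to smoothness of $h$ via Journ\'e's lemma.

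Setup: since $f$ is $C^1$-close to $L$, structural stability yields a unique topological conjugacy $h$ near the identity with $f\circ h = h\circ L$, and $h$ is H\"older continuous. Pull back the derivative cocycle of $f$ to obtain, over the base $L$ on $\T^d$, the two cocycles
$$
  \A_x \equiv L, \qquad \B_x \,:=\, D_{h(x)} f,
$$
so that $\B^n_x = D_{h(x)} f^n$. Then $\A$ is constant, $\B$ is $C^0$-close to $\A$ (because $f$ is $C^1$-close to $L$ and $h$ is continuous), and the periodic-data hypothesis transfers directly: for every $L$-periodic $q = L^n q$ the point $p:=h(q)$ is fixed by $f^n$, so
$$
  \B^n_q \,=\, D_p f^n \,=\, C(p)\circ L^n\circ C(p)^{-1} \,=\, \tilde C(q)\circ \A^n_q \circ \tilde C(q)^{-1},
$$
where $\tilde C(q) := C(h(q))$. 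If $p_0 = f^N p_0$ is the periodic point at which $C$ is H\"older, then $q_0 := h^{-1}(p_0)$ is $L^N$-fixed and, since $h$ is H\"older, so is $\tilde C$ at $q_0$ (with a possibly smaller exponent).

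Passing to the iterates $L^N$ and $f^N$ makes $q_0$ a fixed point of the base map, so Corollary \ref{constant} applied to $\A^N \equiv L^N$ and $\B^N_x = D_{h(x)} f^N$ over $L^N$ produces an $\a$-H\"older continuous $M : \T^d \to GL(d,\R)$ satisfying
$$
  D_{h(x)} f^N \,=\, M(L^N x)\circ L^N\circ M(x)^{-1} \quad\text{for every } x \in \T^d.
$$
This is exactly the shape of the chain rule $D_{h(x)} f^N = D_{L^N x} h \cdot L^N \cdot (D_x h)^{-1}$, so $M$ is the natural candidate for $Dh$. To upgrade this formal match to actual regularity, restrict to the affine foliations of $L^N$: on each unstable leaf $x_0 + E^u$, the cohomology equation, uniform expansion of $L^N$ along $E^u$, and H\"older continuity of $M$ give, via a standard telescoping/Liv\v{s}ic argument, that $h|_{x_0+E^u}$ is $C^{1+\text{H\"older}}$ with derivative $M|_{E^u}$; the analogous statement holds along stable leaves. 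Journ\'e's regularity lemma then forces $h$ to be globally $C^{1+\text{H\"older}}$ on $\T^d$, completing the proof.

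The main obstacle is invoking Corollary \ref{constant} cleanly. The constant cocycle $L$ is in general not fiber bunched (its eigenvalues can have very different moduli), so Theorem \ref{equal data} and the centralizer-stabilization argument of Proposition \ref{centralizers stabilize} do not apply to $\A = L$ directly. The corollary is precisely designed to absorb this difficulty for perturbations of a constant cocycle; everything else — the reduction from the periodic point $p_0$ to a fixed point of an iterate, and the Journ\'e-type bootstrap from leafwise to global smoothness — is essentially routine once the H\"older cohomology is in hand.
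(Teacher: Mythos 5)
Your reduction of the periodic-data hypothesis to the setting of Corollary \ref{constant} is correct, and that corollary does produce an $\alpha$-H\"older $M$ solving $D_{h(x)}f = M(Lx)\circ L\circ M(x)^{-1}$. The problem is the final step: you assert that a ``standard telescoping/Liv\v{s}ic argument'' plus Journ\'e then force $h$ to be $C^{1+\text{H\"older}}$ along leaves. This is exactly the implication the paper explicitly says is \emph{not known}. Right after Theorem \ref{rigidity theorem} the author remarks that Corollary \ref{constant} already gives H\"older cohomology of $L$ and $Df$ \emph{without} the irreducibility assumption, and then states: ``This is not known to imply smoothness of $h$.'' The obstruction is that $M$ satisfying the cocycle-cohomology equation is only a \emph{candidate} for $Dh$; the cohomology equation does not by itself give differentiability of $h$, and no telescoping argument extracts actual leafwise $C^1$ regularity from it in general. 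Your proof never uses irreducibility, and the paper cites examples in~\cite{G} showing irreducibility cannot be dropped in this kind of local rigidity outside the conformal case, which is a further sign the proposed route cannot work as written.

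The paper's actual argument is genuinely different. It splits the unstable bundle of $L$ (and of $f$) into sub-bundles $E_i^L$, $E_i^f$ according to the distinct moduli of the eigenvalues, uses irreducibility to conclude $L$ has simple eigenvalues so that $L|E_i^L$ is conformal (and in particular fiber bunched), applies Proposition \ref{fiber bunching from periodic} and Theorem \ref{conjugate data} on the iterate $f^N$ fixing $p_0$ to get a H\"older conjugacy between $L^N|E_i^L$ and $Df^N|E_i^f$, and deduces from conformality of the former that $Df|E_i^f$ is uniformly quasiconformal, hence (by~\cite{KS12}) conformal for a continuous metric. The regularity of $h$ is then obtained as in the proof of Theorem \ref{GKS theorem}, which relies on conformality along each $E_i$ together with the density of the linear subspaces $E_i^L$ in $\T^d$ (another consequence of irreducibility) — not on the global H\"older cohomology solution at all. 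To fix your proof you would need to supply the conformality-per-block step and then appeal to the actual GKS machinery rather than a generic Journ\'e-type bootstrap.
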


The proof of this theorem differs   from the proof of 
Theorem~\ref{GKS theorem} only in the way we obtain conformality 
of $Df$ on certain invariant sub-bundles, as explained below. 

We denote by $E^{u,L}$  the unstable distribution of $L$. 
Let $\,1<\rho_1 <\rho_2< \dots <\rho_l\,$ be the distinct moduli
of the unstable eigenvalues of $L$, and let
$$
 E^{u,L}= E_1^L \oplus E_2^L \oplus \dots \oplus E_l^L
$$
 be the corresponding splitting of the unstable distribution.
Since $f$ is $C^1$ close to $L$, $f$ is also
Anosov, and its unstable distribution $E^{u,f}$ splits into 
a direct sum of $l$ invariant H\"older continuous distributions close to the
corresponding distributions for $L$:
 $$
   E^{u,f}= E_1^f \oplus E_2^f \oplus \dots \oplus E_l^f
$$
(see, e.g. ~\cite[Section 3.3]{Pes}).

Let $\A=L|{E_i^L}$ and  $\B=Df|{E_i^f}$. 
 Conformality of the cocycles plays an important role in establishing 
 smoothness of the conjugacy.
Since $L$ is irreducible, all its eigenvalues are simple. Thus 
the restriction of $L$ to $E_i^L$ is diagonalizable over $\C$
and its eigenvalues are of the same modulus.
Hence the cocycle $\A=L|{E_i^L}$ is conformal
in some norm.

In \cite{GKS}, conformality of $\B$
at the periodic points together with the assumption that 
the distributions $E_i^L$ and $E_i^f$ are either one- or two-dimensional
allows us to conclude that, by  \cite[Theorem 1.3]{KS10}, 
the cocycle $\B$ is conformal.  In higher dimensions,
conformality at the periodic points 
does not imply conformality  \cite[Proposition 1.2] {KS10}.
In Theorem \ref{rigidity theorem} we make no assumptions 
on the dimensions of $E_i^L$,  and so we use a different approach
to obtain conformality of $\B$.
\vskip.1cm

Let $\beta>0$ be so that $C(p)$ is $\beta$-H\"older at $p_0$
and all cocycles $Df|{E_i^f}$ are $\beta$-H\"older.
 Since the cocycle $\A=L|{E_i^L}$ is conformal,  it is fiber 
bunched and it follows that  the cocycle  $\B=Df|{E_i^f}$
is also fiber bunched.  
We consider the iterates $\A^N$ and $\B^N$ over $f^N$,
where $N$ is the period of $p_0$.
Theorem \ref{conjugate data} implies
that there exists a H\"older continuos conjugacy $C$ between
 $\A^N$ and $\B^N$.  Since $\A^N$ is conformal,
 this implies that $\B^N$ is uniformly quasiconformal,
 and hence so is $\B$.
By \cite[Corollary 3.2] {KS12}, $\B$ is conformal with respect to a continuous Riemannian metric
on $E_i^f$. 

After conformality of $Df$ on each sub-bundle $E_i^f$ is obtained,
the proof of Theorem \ref{rigidity theorem} proceeds exactly
as the proof of Theorem \ref{GKS theorem}.
We  consider the topological conjugacy $h$ between $L$ and $f$ 
close to the identity.
We use conformality to show that $h$ is $C^{1+ \text{H\"older}}$ 
along the leaves of the linear foliation tangent to $E_i^L$,
and then we establish the smoothness of $h$ on $\M$.
$\QED$
\vskip.2cm

Corollary \ref{constant}, which we prove next, shows  that the cocycles
$L$ and $Df$ are H\"older cohomologous without irreducibility assumption
on $L$. This is not known to imply smoothness of $h$. The arguments 
in Theorems \ref{GKS theorem} and \ref{rigidity theorem} use both
density of the subspaces $E^L_i$ in $\T^d$ and conformality of $L|E^L_i$, which
follow from irreducibility. 

\vskip.3cm

\noindent {\bf Proof of Corollary \ref{constant}.}
The proof is closely related to the above argument.

Let $A(x) =A$ be the generator of $\A$.
Let $\rho_1 < \dots <\rho_l$ be the distinct moduli
of the eigenvalues of $A$ and let 
$\R^d = E_1^A \oplus  \dots \oplus E_l^A$ 
be the corresponding invariant splitting into direct sums of the 
generalized eigenspaces. We denote $A_i=A| E_i^A$. It follows 
that for any $\e >0$ there exists $C_\e$ such that 
$$
C_\e^{-1} (\rho_i-\e)^n \le \| A_i^n u \| \le C_\e (\rho_i+\e)^n
\quad \text{for any unit vector }u\in E_i^A,
$$ 
 and hence the  cocycle $\A_i$ generated by 
$A_i$ is fiber bunched for any $\beta>0$. Moreover, any cocycle $\B$ with generator $B$
sufficiently $C^0$ close to $A$ has the corresponding invariant
splitting $\R^d = E_1^B (x) \oplus \dots \oplus E_l^B(x)$,
which is close to that of $\A$ and is $\beta$-H\"older for some $\beta >0$.
The corresponding restrictions $\B_i$ satisfy similar estimates and
hence are also fiber bunched. Since the conjugacy $C(p)$ maps 
$E_i^A(p)$ to $E_i^B(p)$, the cocycles $\A_i$ and $\B_i$ have conjugate 
periodic data. Hence by Theorem \ref{conjugate data} they are conjugate via a H\"older
continuous function $C_i$ and we obtain a conjugacy between 
$\A$ and $\B$ 
as the direct sum of $C_i$.
 $\QED$

\end{document}